\newtheorem{thma}{Theorem} 
\newtheorem{thmaa}{Theorem} 
\newtheorem{thm}{Theorem}[section]
\newtheorem{cor}[thm]{Corollary}
\newtheorem{prop}[thm]{Proposition}
\theoremstyle{definition}
\newtheorem{defn}[thm]{Definition}
\theoremstyle{remark}
\newcommand{\Aut}{\operatorname{Aut}}
\newcommand{\Out}{\operatorname{Out}}
\newcommand{\diam}{\operatorname{diam}}
\newcommand{\cv}{\operatorname{cv}}
\newcommand{\rk}{\operatorname{rk}}
\newcommand{\ncl}{\operatorname{ncl}}
\newcommand{\Cay}{\operatorname{Cay}}
\newcommand{\Cyl}{\operatorname{Cyl}}
\newcommand{\Curr}{\operatorname{Curr}}
\numberwithin{equation}{subsection}
\title{spectral rigidity and subgroups of free groups}
\author{Brian Ray}
\address{Department of Mathematics, University of Illinois at Urbana-Champaign, 1409 West Green Street, Urbana, IL 61801, USA}
\email{ray8@illinois.edu}
\subjclass[2000]{20F65}
\date{}
\thanks{The author acknowledges support from National Science Foundation grant DMS-0904200.}
\begin{document}

\begin{abstract}
A subset $\Sigma \subset F_N$ of the free group of rank $N$ is called \emph{spectrally rigid} if whenever trees $T, T'$ in Culler-Vogtmann Outer Space are such that $\| g \|_T = \| g \|_{T'}$ for every $g \in \Sigma$, it follows that $T = T'$.  Results of Smillie, Vogtmann, Cohen, Lustig, and Steiner prove that (for $N \geq 2$) no finite subset of $F_N$ is spectrally rigid in $F_N$.  We prove that if $\{ H_i \}_{i=1}^k$ is a finite collection of subgroups, each of infinite index, and $g_i \in F_N$, then $\cup_{i=1}^k g_i H_i$ is not spectrally rigid in $F_N$.  Taking $H_i = 1$, we recover the results about finite sets.  We also prove that any coset of a nontrivial normal subgroup $H \lhd F_N$ is spectrally rigid.
\end{abstract}

\maketitle

\section{Introduction}

The Culler-Vogtmann Outer Space (denoted $\cv_N$) was introduced by Culler and Vogtmann \cite{MR830040} as a free group analog of Teichmuller Space.  While the latter admits an action by the Teichmuller Modular Group, Outer Space admits an action by $\Out(F_N)$, the outer automorphism group of $F_N$.  The space $\cv_N$ has proven to be a key tool in studying $\Out(F_N)$.

Formally, $\cv_N$ is the space of all free, minimal, discrete, isometric actions of $F_N$ on $\mathbb{R}$-trees, $T$, considered up to $F_N$-equivariant isometry.  One may think of a point in the space as a triple $(\Gamma,\tau,l)$ where $\Gamma$ is a finite graph equipped with both a marking isomorphism, $\tau \colon F_N \to \pi_1(\Gamma)$, and a function $l \colon E(\Gamma) \to \mathbb{R}_{> 0}$ which assigns a length to each edge.  Given such a triple, one sets $T = \widetilde{\Gamma}$ so that $T$ is equipped with a covering space action of $F_N \cong \pi_1(\Gamma)$ which is free, minimal, discrete, and isometric.  One obtains $(\Gamma,\tau,l)$ from $T$ by considering the quotient graph $\Gamma = F_N \backslash T$.  To each $T$ we associate a length function $\| \cdot \|_T \colon F_N \to \mathbb{R}_{\geq 0}$ defined by $\| g \|_T = \inf_{x \in T} \{ d(x,gx) \}$.  Equivalently, $\| g \|_T$ is the length of the cyclically reduced loop (in $\Gamma$) which represents the conjugacy class of $\tau(g)$.  The closure, $\overline{\cv}_N$, of $\cv_N$ is known to consist of the so called very small actions of $F_N$ on $\mathbb{R}$-trees \cite{BF,MR1341810}.  The fact that $\| \cdot \|_T$ determines $T$ \cite{MR1851337} gives an injection $\overline{\ell} \colon \overline{\cv}_N \to (\mathbb{R}_{\geq 0})^{F_N}$; we call $\overline{\ell}(T)$ the \emph{marked length spectrum} of $T$.  Let $\ell = \overline{\ell}|_{\cv_N}$ and note that $\ell$ is also injective.  Morally speaking, a subset $\Sigma \subset F_N$ is \emph{spectrally rigid} (resp. \emph{strongly spectrally rigid}) if one can replace the target of $\ell$ (resp. $\overline{\ell}$) by $(\mathbb{R}_{\geq 0})^{\Sigma}$ and still retain injectivity.  The term was recently coined (in the free group setting) by Kapovich \cite{MR2869139}, though similarly spirited work dates back to the early 1990's (see \cite{MR1105334,MR1182503}).

\begin{defn}[(Strongly) Spectrally rigid]\label{sr}
We say $\Sigma \subseteq F_N$ is (\emph{strongly}) \emph{spectrally rigid} if whenever $T_1, T_2 \in \cv_N$ (resp. $\overline{\cv}_N$) are such that $\|g\|_{T_1} = \|g\|_{T_2}$ for every $g \in \Sigma$, then $T_1 = T_2$ in $\cv_N$ (resp $\overline{\cv}_N$).
\end{defn}

For $N \geq 3$ Smillie and Vogtmann \cite{MR1182503} prove that no finite set is spectrally rigid in $F_N$.  More specifically, they show that given a finite set, $\Sigma$, of conjugacy classes there exists a one parameter family of trees, $T_t \in \cv_N$, whose translation length functions all agree on $\Sigma$.  For $N = 2$, Cohen, Lustig and Steiner \cite{MR1105334} provide a similar argument as to why no finite set is spectrally rigid in $F_N$.  We prove the following.

\begin{thma}\label{A}
Let $N \geq 2$.  Let $\{ H_i \}_{i=1}^k$ be a finite collection of finitely generated subgroups $H_i < F_N$.  Let $\{ g_i \}_{i=1}^k$ be a finite collection of elements $g_i \in F_N$. Let $\mathcal{H} = \cup_{i=1}^k g_iH_i$.  Then the following are equivalent.
\begin{enumerate}
\item For every $i$, $[F_N \colon H_i] = \infty$.
\item $\mathcal{H} \subset F_N$ is not spectrally rigid in $F_N$.
\end{enumerate}
\end{thma}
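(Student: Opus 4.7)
The plan is to address the two implications separately. For the direction ``(not (1)) $\Rightarrow$ (not (2))''---that is, showing that if some $H_i$ has finite index then $\mathcal{H}$ is spectrally rigid---it suffices, since spectral rigidity is inherited by supersets in $F_N$, to prove this for the single coset $gH := g_iH_i$ with $[F_N : H] < \infty$. Suppose $T_1, T_2 \in \cv_N$ agree on $gH$. Given any $1 \neq f \in F_N$, the finite-index hypothesis furnishes an integer $m \geq 1$ with $f^{jm} \in H$ for all $j \geq 1$, hence $gf^{jm} \in gH$ and
\[
\|gf^{jm}\|_{T_1} \;=\; \|gf^{jm}\|_{T_2} \qquad (j = 1, 2, 3, \ldots).
\]
Since every nontrivial element of $F_N$ acts hyperbolically on any $T \in \cv_N$, a routine estimate comparing the cyclically reduced edge path representing $\tau(g)\tau(f)^{jm}$ with that of $\tau(f)^{jm}$ yields $\|gf^{jm}\|_T = jm\|f\|_T + O_{g,T}(1)$; dividing by $jm$ and letting $j \to \infty$ extracts $\|f\|_T$ from the data on $gH$. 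Hence $\|f\|_{T_1} = \|f\|_{T_2}$ for every $f \in F_N$, and the injectivity of $\ell$ recalled in the introduction forces $T_1 = T_2$.

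The substantive direction is ``(1) $\Rightarrow$ (2)''. My plan is to construct a nontrivial one-parameter family $\{T_t\}_{t \in [0,\varepsilon)} \subset \cv_N$ with $T_t \neq T_0$ for $t > 0$ and $\|g_ih\|_{T_t}$ independent of $t$ for every $h \in H_i$ and every $i$. Taking $T_0$ to be a rose in a suitable marking, I would build $T_t$ by a Nielsen-type unfolding at the vertex of the rose that introduces a new edge of length $t$, in the spirit of Smillie--Vogtmann \cite{MR1182503} and Cohen--Lustig--Steiner \cite{MR1105334}. Such an unfolding is parameterized by a Whitehead partition $P$ of the $2N$ half-edges at the rose vertex, and a direct computation on cyclically reduced loops shows that the translation length of a conjugacy class changes with $t$ only if its cyclically reduced loop employs a ``forbidden turn'' with respect to $P$, i.e.\ crosses the vertex via a pair of half-edges on opposite sides of $P$.

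The heart of the proof is therefore the combinatorial task of selecting a single Whitehead partition $P$ that is simultaneously compatible with every coset $g_iH_i$: no cyclically reduced loop representing an element of $\cup_i g_iH_i$ may use a forbidden turn at the rose vertex. The turn data for each coset can be read off from a finite immersed graph---namely, the Stallings subgroup graph of $H_i$ with basepoint shifted according to $g_i$---and the infinite-index assumption on each $H_i$ forces that graph to be missing some edge-direction, hence to impose only a proper set of turn constraints. The main obstacle will be ensuring that the intersection of freedoms afforded by the $k$ coset graphs is still nonempty; one way forward is to precompose by an element of $\Out(F_N)$ to arrange all $k$ Stallings graphs to leave a common vertex of the rose ``exposed,'' at which point any Whitehead partition separating exposed half-edges from used half-edges works. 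The hypothesis that every $H_i$ has infinite index is precisely what permits this simultaneous avoidance, and (2)$\Rightarrow$(1) shows it cannot be weakened. Once $P$ is fixed, the Bass--Serre bookkeeping verifying that $T_t$ varies nontrivially in $\cv_N$ while $\|g_ih\|_{T_t}$ remains constant on $\mathcal{H}$ is routine.
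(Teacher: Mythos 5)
Your easy direction is fine: showing that a coset $gH$ of a finite--index subgroup is spectrally rigid via $\|gf^{jm}\|_T = jm\|f\|_T + O_{g,T}(1)$ is just a fleshed--out version of the paper's one--line remark that $\|g^n\| = n\|g\|$ handles this case. The problem is the main direction. Your key combinatorial claim --- that $[F_N : H_i] = \infty$ forces the Stallings graph of $H_i$ to ``impose only a proper set of turn constraints'' at the rose vertex --- is false. Infinite index means some vertex \emph{of the Stallings core graph} is missing a direction; it does not mean that the set of turns of the rose crossed by cyclically reduced representatives of elements of $H_i$ is proper. For a concrete counterexample take $N = 2$ and $H = \langle w \rangle$ with $w$ a cyclically reduced word containing (cyclically) every reduced two--letter word over $\{a^{\pm 1}, b^{\pm 1}\}$ (such $w$ exists by an Euler--circuit argument). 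Then $H$ is cyclic, hence of infinite index, yet the powers of $w$ use \emph{every} turn at the rose vertex, so no Whitehead partition is compatible with $H$ in your sense in this marking; and there is no evident reason an automorphism can repair this. In other words, the step you flag as ``the main obstacle'' --- arranging simultaneous avoidance --- is not a technicality to be arranged by precomposition; it is exactly where the content of the theorem lies, and your proposal offers no argument for it.

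The paper's route is genuinely different at this point and avoids the false requirement altogether: it never asks elements of $\mathcal{H}$ to avoid turns, only to satisfy the much weaker Property $\mathcal{W}$ (Definition \ref{W}): there are a primitive element $z$, a basis $\mathcal{B} \ni z$, and a bound $M$ such that no element of $\mathcal{H}$ contains $z^k$ with $|k| > M$ in (cyclically) reduced form over $\mathcal{B}$; the Smillie--Vogtmann and Cohen--Lustig--Steiner deformations need only this (Theorem \ref{CLSRSV}), not absence of turns. Property $\mathcal{W}$ is then verified using a fully irreducible $\varphi$ with train track map $f \colon \Gamma \to \Gamma$: by Bestvina--Feighn--Handel, only finite--index subgroups carry a leaf of the lamination, so for large $n$ the paths $f^n(e)$ cannot be read in any of the finitely many Stallings graphs $(X^T_{H_i})_b$ (Proposition \ref{1}, Corollary \ref{2}); one then takes a primitive $z$ containing such a path, so that overlaps of elements of the $H_i$ with the $z$--axis are uniformly bounded, and a quasi--geodesic stability argument transfers this to a bound on powers of $z$ in the basis $\mathcal{B} \ni z$ (Proposition \ref{3}, Theorem \ref{vW}); the finitely many translates $g_i$ change the bound only by a constant. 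Some input of this kind (iwip/lamination machinery or a substitute producing such a $z$) is missing from your plan, and without it the construction of the family $T_t$ does not get off the ground.
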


In the above theorem, we recover the results of Smillie, Vogtmann, Cohen, Lustig, and Steiner by setting $H_i = \{1\}$.  The proof by Smillie and Vogtmann uses the fact that in a finite set of conjugacy classes, there is a universal bound on the exponent with which a given primitive element may occur.  Their argument, however, works for \emph{any} set $\Sigma$ which satisfies the following property: there exists a triple $(\mathcal{A},a,M)$ with $\mathcal{A}$ a free basis, $a \in \mathcal{A}$, and $M \geq 1$, so that for any $g \in \Sigma$, if $a^k$ occurs as a subword of the cyclically reduced form (over $\mathcal{A}$) of $g$, then $|k| \leq M$ (see Definition \ref{W} and Theorem \ref{CLSRSV}).  To prove Theorem \ref{A}, we use \emph{laminations associated to a fully irreducible automorphism}.  Introduced by Bestvina, Feighn, and Handel in \cite{MR1445386} these laminations are defined -- for a given fully irreducible $\varphi \in \Out(F_N)$ -- in terms of a train track representative $f \colon \Gamma \to \Gamma$.  Leaves of the lamination are ``generated'' by edgepaths of the form $f^n(e)$ for any $e \in E(\Gamma)$ and $n \in \mathbb{N}$ (see \cite{MR1445386,2011arXiv1104.1265K}).  It is a theorem of Bestvina, Feighn, and Handel \cite{MR1445386} that only finitely generated subgroups of finite index may ``carry a leaf'' of such a lamination.  It follows that, for finitely generated infinite index subgroups, edgepaths of the form $f^n(e)$ (for suitably large $n$) can not be ``read'' in $H$.  Thus if a primitive loop contains such an $f^n(e)$, then it cannot be a subword of any $h \in H$ (compare Definition \ref{W}).

Theorem \ref{A} fails if we allow infinite unions.  Indeed, consider $\mathcal{H} = \cup_{g \in F_N} g \{ 1 \}$.  Then $\mathcal{H} = F_N$ and so is spectrally rigid.  Since translation length functions satisfy $\| g^n \| = n \| g \|$, any finite index subgroup is spectrally rigid.  More generally, if $\pi \colon F_N \to G$ is a presentation homomorphism ($G \cong F_N / \ker \pi$) for a torsion group, $G$, then $\ker \pi$ is spectrally rigid.  Note that this applies to (normal) subgroups of infinite index (in the case $G$ is infinite), and these subgroups are necessarily infinitely generated.  Thus the finite generation assumption in Theorem \ref{A} is also essential.  

Carette, Francaviglia, Kapovich, and Martino prove \cite{MR2966693} that if $H < \Aut(F_N)$ is such that the image of $H$ in $\Out(F_N)$ contains an infinite normal subgroup, then for any $g \in F_N$ (so long as $N \neq 2$ or $g$ not conjugate to a power of $[a,b]$ in $F_2 = F(a,b)$) the orbit $Hg$ is spectrally rigid.  This shows, for example, that the commutator subgroup, $[F_N,F_N]$, of $F_N$ is spectrally rigid, as it contains the orbit $\Aut(F_N)[g,h]$ for a suitable commutator $[g,h]$.  More generally, any nontrivial verbal or marginal subgroup is spectrally rigid since such groups are known to be characteristic \cite[\S 2.3]{MR1357169}.

Motivated by the above result regarding characteristic subgroups, Ilya Kapovich asked the following question: Is it true that for any nontrivial normal subgroup $H \lhd F_N$, $H$ is a spectrally rigid set?  We prove the following.

\begin{thma}\label{C}
Let $H \lhd F_N$ be a nontrivial normal subgroup.  Then for any $g \in F_N$, the coset $gH$ is strongly spectrally rigid.
\end{thma}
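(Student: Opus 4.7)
The plan is to show that $\|w\|_{T_1} = \|w\|_{T_2}$ for every $w \in F_N$ and then conclude $T_1 = T_2$ via the injectivity of $\overline{\ell}$ on $\overline{\cv}_N$ recalled in the introduction. The strategy, for each target $w$, is to exploit the normality of $H$ in order to build a sequence of elements in $gH$ whose translation lengths on any tree $T \in \overline{\cv}_N$ asymptotically recover $\|w\|_T$.

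Pick a nontrivial $h_0 \in H$ (such exists because $H \neq \{1\}$). For $w \in F_N$ and $k \geq 1$, consider
\[
v_k := g \cdot [w^k, h_0] = g\, w^k h_0 w^{-k} h_0^{-1}.
\]
Since $H \lhd F_N$, we have $[w^k, h_0] \in H$, hence $v_k \in gH$, and the hypothesis provides $\|v_k\|_{T_1} = \|v_k\|_{T_2}$ for every $k$. The central technical step I would prove is the asymptotic claim
\[
\frac{\|v_k\|_T}{k} \longrightarrow 2\|w\|_T \qquad (k \to \infty),
\]
valid for every $T \in \overline{\cv}_N$. Granting it, dividing the equality $\|v_k\|_{T_1} = \|v_k\|_{T_2}$ by $k$ and passing to the limit yields $\|w\|_{T_1} = \|w\|_{T_2}$, and letting $w$ range over $F_N$ finishes the proof.

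To establish the asymptotic, I would split on the dynamics of $w$ on $T$. If $w$ fixes a point $p \in T$ (the elliptic case), then $w^{-k}p = p$, and a routine estimate bounds $d(p, v_k p)$ by $d(p, gp) + 2 d(p, h_0 p)$, uniformly in $k$; hence $\|v_k\|_T = O(1)$ and $\|v_k\|_T/k \to 0 = 2\|w\|_T$. If $w$ is hyperbolic on $T$, then $v_k$ is conjugate in $F_N$ to $h_0^{-1}g \cdot (w^k h_0 w^{-k})$; the second factor has axis $w^k A_{h_0}$, which for large $k$ has drifted along $A_w$ so as to be disjoint from, and at distance $\approx k\|w\|_T$ from, the axis of $h_0^{-1}g$. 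The standard formula $\|u_1 u_2\|_T = \|u_1\|_T + \|u_2\|_T + 2 d_T(A_{u_1}, A_{u_2})$ for hyperbolic $\mathbb{R}$-tree isometries with disjoint axes then gives $\|v_k\|_T = 2k\|w\|_T + O(1)$.

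The hardest part will be making the hyperbolic case rigorous for arbitrary very small actions in $\overline{\cv}_N$, and in particular ruling out the degenerate axis configurations ($A_{h_0}$ or $A_{h_0^{-1} g}$ coinciding with $A_w$) under which the drift argument breaks. I expect to handle these by letting $h_0$ depend on $w$: the "bad" choices for $h_0$ are confined to a finite union of cosets of cyclic subgroups of $F_N$, and since $H \lhd F_N$ is nontrivial with $N \geq 2$, $H$ is not cyclic (a nontrivial cyclic normal subgroup would lie in the trivial center of $F_N$) and a fortiori is not contained in any such finite union of cyclic cosets, so an admissible $h_0 \in H$, hyperbolic on both $T_i$ with $A_{h_0}$ transverse to $A_w$ and $A_g$, always exists. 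The residual case, in which every choice of $h_0$ degenerates, forces $g$ itself into the cyclic subgroup generated by a root of $w$; in that case $\|w\|_{T_i}$ is determined by $\|g\|_{T_i}$, and these two quantities already agree because $g = g \cdot 1 \in gH$.
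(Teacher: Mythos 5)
Your overall strategy is genuinely different from the paper's, and the central asymptotic idea is sound: for a good choice of $h_0$ one does get $\|g[w^k,h_0]\|_T = 2k\|w\|_T + O(1)$, and dividing by $k$ recovers $\|w\|_{T_1}=\|w\|_{T_2}$. The gap lies exactly in the step you flag as hardest, and as written it does not close. First, you insist that $h_0$ be hyperbolic in both trees and assert that the bad choices of $h_0$ are confined to a finite union of cosets of cyclic subgroups; for trees in $\overline{\cv}_N$ this is false: the simplicial very small tree dual to the splitting $F_N = F_{N-1} \ast \mathbb{Z}$ (trivial edge group) lies in $\overline{\cv}_N$ and its elliptic set is the union of \emph{all} conjugates of $F_{N-1}$, which is not covered by finitely many cyclic cosets. (Hyperbolicity of $h_0$ is in fact unnecessary --- an elliptic $h_0$ whose fixed subtree has bounded projection to $A_w$ works just as well in the elliptic/hyperbolic product formulas --- but then your description of the bad set has to change accordingly.) Second, ``$A_{h_0}$ or $A_{h_0^{-1}g}$ coinciding with $A_w$'' is not the right degeneracy: what breaks the drift estimate is the characteristic set of $h_0$, or of $h_0^{-1}g$, containing an end of $A_w$ (sharing a ray with it), and to confine such elements to finitely many cyclic cosets one must prove that the stabilizer of an end of $A_w$ in a very small tree is cyclic --- this uses that arc stabilizers are at most cyclic, that the ray-fixing part of an end stabilizer is a locally cyclic (hence cyclic) normal subgroup with abelian quotient via the Busemann homomorphism, and that free groups have no noncyclic solvable subgroups. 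One then still needs a B.~H.~Neumann covering argument to see that the noncyclic group $H$ cannot be covered by finitely many cosets of cyclic subgroups; your ``a fortiori'' is hiding that step. Third, your residual-case patch fails precisely in the most important instance $g \in H$ (e.g.\ $g=1$, i.e.\ rigidity of $H$ itself): there $\|g\|_{T_i}$ carries no information about $\|w\|_{T_i}$. With the end-stabilizer lemma and the Neumann argument in place the residual case never actually occurs, but as written your handling of it is not a proof.

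For comparison, the paper's argument avoids all of this tree geometry: it shows $\ncl(r)$ is strongly spectrally rigid by exhibiting words $w_i=\alpha u^i\beta r\beta^{-1}u^{-i}\alpha^{-1}\gamma u^i\delta r\delta^{-1}u^{-i}\gamma^{-1}\in\ncl(r)$ whose counting currents converge projectively to $\eta_u$, and then applies continuity and linearity of the Kapovich--Lustig intersection form, so no case analysis on elliptic versus hyperbolic behaviour, and no control of stabilizers in boundary trees, is ever needed. Your route could very likely be completed, and would give a more elementary, purely $\mathbb{R}$-tree-theoretic proof; but the missing ingredients identified above (control of elliptic elements, cyclicity of end stabilizers of very small trees, and the covering argument) are precisely what the current-based approach sidesteps, and they must be supplied before your argument is a proof.
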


The proof uses \emph{geodesic currents} on free groups (see \cite{MR2216713,MR2496058,MR2585579,Martin}).  These are positive Radon measures on $\partial^2 F_N = \partial F_N \times \partial F_N \backslash \Delta$ (where $\Delta$ is the diagonal) which are invariant under the diagonal action of $F_N$ and under the map which exchanges the coordinates.  The space of all such currents is denoted $\Curr(F_N)$.  The spaces $\Curr(F_N)$ and $\overline{\cv}_N$ are related by an \emph{intersection form} \cite{MR2496058}, $\langle \cdot , \cdot \rangle \colon \overline{\cv}_N \times \Curr(F_N) \to \mathbb{R}_{\geq 0}$.  The main feature of the intersection form that we use is as follows: given a conjugacy class $g \in F_N$, there is a canonically associated current $\eta_g \in \Curr(F_N)$ such that for any $T \in \overline{\cv}_N$, we have $\langle T ,\eta_g \rangle = \| g \|_T$.  Now given any nontrivial $g, r \in F_N$, we show that the normal closure, $\ncl(r)$, of $r$ contains a sequence of words $w_n$ such that large powers of $g$ exhaust $w_n$.  Linearity of the intersection form then reveals that the associated counting currents converge to the counting current of $g$, and that if two trees $T, T'$ agree on $\ncl(r)$ (and hence on the $w_n$), then they must agree on $g$.  Since $g$ is arbitrary, we conclude $T = T'$.  Our result then follows from the fact that any nontrivial normal subgroup contains $\ncl(r)$ for some nontrivial $r$.


\section{Preliminaries}

The $N$-Rose, $R_N$, is the graph with one vertex and $N$ topological edges.  If $p$ is an edgepath in a graph, $G$, we let $[p]$ denote the reduced edgepath homotopic (rel. endpoints) to $p$.  If $p$ is a closed path, we let $[[p]]$ denote the cyclically reduced edgepath freely homotopic to $p$.  If $g \in F_N$ and $\mathcal{A}$ is a free basis, then $[g]_\mathcal{A}$ (resp $[[g]]_\mathcal{A}$) denotes the freely reduced (resp. cyclically reduced) form of $g$ over the basis $\mathcal{A}$.  If $v, w$ are edgepaths in $G$, we write $v \Subset w$ to indicate that $v$ is a subpath of $w$.  If $g,h \in F_N$ and a free basis, say $\mathcal{A}$, is specified, then $g \Subset h$ means the edgepath $g$ occurs as a subpath of the edgepath $h$ in the graph $G = R_\mathcal{A}$, the rose with petals labelled by the elements of $\mathcal{A}$.

\subsection{Property $\mathcal{W}$}

As mentioned in the introduction, the proof by Smillie and Vogtmann \cite{MR1182503} that no finite set is spectrally rigid in $F_N$ (for $N \geq 3$) admits a generalization.  Specifically, their argument proceeds verbatim so long as the subset $\Sigma \subset F_N$ satisfies the following property.

\begin{defn}[Property $\mathcal{W}$]\label{W}
Let $\Sigma \subseteq F_N$.  We say $\Sigma$ has \emph{property} $\mathcal{W}$ if there exist a free basis $\mathcal{A}$ of $F_N$, $a \in \mathcal{A}$, and $M \geq 1$ so that for any $\sigma \in \Sigma$, if $a^k \Subset [[\sigma]]_\mathcal{A}$ then $|k| \leq M$.
\end{defn}

In \cite{MR2922383} it is verified that the argument provided by Cohen, Lustig, and Steiner \cite{MR1105334} (for $N = 2$) also works under the assumption of property $\mathcal{W}$.

\begin{thm}[{CLS, R, SV\cite{MR1105334,MR2922383,MR1182503}}]\label{CLSRSV}
For $N \geq 2$, if a subset $\Sigma \subset F_N$ has Property $\mathcal{W}$, then $\Sigma$ is not spectrally rigid.
\end{thm}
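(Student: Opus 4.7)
The plan, following Smillie--Vogtmann for $N \geq 3$ and Cohen--Lustig--Steiner for $N=2$, is to exhibit a nontrivial one-parameter family $\{T_t\}_{t \in I} \subset \cv_N$ of pairwise-distinct trees on which $\|\sigma\|_{T_t}$ is independent of $t$ for every $\sigma \in \Sigma$. The existence of such a family directly contradicts spectral rigidity of $\Sigma$.

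Fix the witness $(\mathcal{A}, a, M)$ from Property $\mathcal{W}$ and take $T_0$ to be the universal cover of the unit-length rose $R_\mathcal{A}$, so that $\|w\|_{T_0}$ equals the cyclically reduced $\mathcal{A}$-length of $w$. For $N \geq 3$ I would follow Smillie--Vogtmann: construct $T_t$ by performing a local perturbation of $R_\mathcal{A}$ at its vertex --- a Whitehead-style blow-up along a short edge of length $t$ calibrated to the letter $a$, paired with a compensating adjustment of petal lengths --- so that, for a cyclically reduced word $w$, the difference $\|w\|_{T_t} - \|w\|_{T_0}$ is an explicit linear function of $t$ whose coefficient depends only on the sequence of $a$-block exponents of $[[w]]_\mathcal{A}$. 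The blow-up is tuned so that this coefficient vanishes whenever every maximal $a$-block of $w$ has exponent of absolute value at most $M$, which forces $\|\sigma\|_{T_t}$ to be constant in $t$ for each $\sigma \in \Sigma$. Simultaneously, the coefficient is nonzero on a test word with larger $a$-exponent, say $a^{M+1}$, so the function $t \mapsto \|a^{M+1}\|_{T_t}$ varies linearly and the $T_t$ are pairwise distinct in $\cv_N$. For $N = 2$ the rose has too few petals for this construction to yield a genuine family, and one instead uses the family of very small $F_2$-actions produced in \cite{MR1105334}, whose extension to all subsets with Property $\mathcal{W}$ is verified in \cite{MR2922383}.

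The main obstacle is the exact-cancellation computation at the heart of the construction: the blow-up must be calibrated so that its $t$-linear contribution to $\|w\|_{T_t}$ is \emph{identically} zero on all of $\Sigma$ (not merely small), and the parameter interval $I$ must be chosen small enough that each $T_t$ is a bona fide point of $\cv_N$ --- all petal lengths stay positive and the action remains free, minimal, discrete. The uniform bound $M$ furnished by Property $\mathcal{W}$ is exactly what makes both the combinatorial cancellation and the length of the admissible interval $I$ uniform across $\Sigma$, which is why Property $\mathcal{W}$ is the natural abstraction of ``finite set'' under which the classical arguments run verbatim.
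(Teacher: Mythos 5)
This matches the paper's treatment: Theorem \ref{CLSRSV} is not proved afresh in the paper but attributed to \cite{MR1182503} for $N \geq 3$ (whose one-parameter-family argument uses only the uniform bound $M$ furnished by Property $\mathcal{W}$) and to \cite{MR1105334}, as verified under Property $\mathcal{W}$ in \cite{MR2922383}, for $N = 2$. Your sketch of the calibrated perturbation and your deferral of the exact cancellation computation to those sources is precisely the content of the paper's citation; the one point to watch is that the $N=2$ family must consist of genuine points of $\cv_2$ (free, minimal, discrete actions), which is what those references supply, rather than merely very small actions in the closure.
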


\subsection{Stallings Subgroup Graphs}

Given a point $T = (\Gamma, \tau, l) \in \cv_N$, and a finitely generated subgroup $H \leq F_N$, we consider the (possibly infinite) cover of $\Gamma$ corresponding to the subgroup $\tau(H)$, denoted $(X^T_H)_r$.  Its core, denoted $X^T_H$, is finite (since $H$ is finitely generated) and represents the conjugacy class of $\tau(H)$ in $\pi_1(\Gamma)$.  Note that $X^T_H = (X^T_H)_r$ (i.e. $X^T_H$ is $E(\Gamma)$-regular) if and only if $[F_N : H] < \infty$ if and only if $X^T_H \to \Gamma$ is a covering.  If $H$ is of infinite index, then one obtains $(X^T_H)_r$ from $X^T_H$ by attaching ``hanging trees'' at vertices which are not $E(\Gamma)$-regular.  Note that varying the basepoint in $(X^T_H)_r$ corresponds to conjugation of $\tau(H)$ by elements of $\pi_1(\Gamma)$.  In the event the basepoint, $b$, in $(X^T_H)_r$ lies outside $X^T_H$, we let $(X^T_H)_b$ denote the graph $X^T_H \cup [X^T_H,b]$, where $[X^T_H,b]$ is the bridge between $X^T_H$ and $b$ in $(X^T_H)_r$.  For more information see, e.g., \cite{MR2396717, MR1882114, MR695906}.

\subsection{Train Track Maps}

By a \emph{graph map}, we mean a function $f \colon V(\Gamma) \cup E(\Gamma) \to V(\Gamma') \cup P(\Gamma')$ sending vertices to vertices and edges to edgepaths and for which the notions of incidence and inverse are preserved.  If $\mathcal{A}$ is a free basis of $F_N$ and $\varphi \in \Out(F_N)$, then we can always think of $\varphi$ as a graph map $R_N \to R_N$ by pairing $E(R_N)$ with $\mathcal{A}$.  By a \emph{marked graph} we mean a pair $(\Gamma,\tau)$, where $\Gamma$ is a graph and $\tau \colon R_N \to \Gamma$ is a homotopy equivalence.  In the event $\Gamma = R_N$, we write $R_N = R_\mathcal{A}$, with $\mathcal{A}$ the free basis determined by the pairing $E(R_N)$ with $\mathcal{A}$.  Given $\varphi \in \Out(F_N)$, and a marked graph $(\Gamma,\tau)$ we say that a graph map $f \colon \Gamma \to \Gamma$ is a \emph{topological representative} of $\varphi$ with respect to $\tau$ if $f$ is a homotopy equivalence and the outer automorphism determined by $\tau \circ f \circ \tau^{-1} \colon R_N \to R_N$ is equal to $\varphi \colon R_N \to R_N$.

\begin{defn}[Train track map]
A graph map $f \colon \Gamma \to \Gamma$ is called a \emph{train track map} if for every $e \in E(\Gamma)$, the edgepath $f^n(e)$ is not a vertex and is reduced.
\end{defn}

We say that $\varphi \in \Out(F_N)$ is \emph{reducible} if there is a free factorization $F_N = F^0 \ast \cdots \ast F^{l-1} \ast H$, with $l \geq1$ and $1 \leq \rk (F^0) < N$, so that $\varphi$ permutes the conjugacy classes of the $F^i$; we allow $H = 1$.  Otherwise, we say $\varphi$ is \emph{irreducible}.

\begin{thm}[{Bestvina and Handel \cite[Theorem 1.7]{MR1147956}}]\label{BH1.7}
Every irreducible automorphism is topologically represented by a train track map.
\end{thm}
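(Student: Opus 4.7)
The plan is to follow the algorithmic strategy of Bestvina and Handel: start with any topological representative $f \colon \Gamma \to \Gamma$ of $\varphi$, and successively simplify it via folds and collapses, until no further simplification is possible; at that point $f$ will be forced to be a train track map.

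First I would set up the relevant complexity. Given a topological representative $f$ on a marked graph $\Gamma$ (which we may take to have no valence-one vertices and no valence-two vertices, by collapsing edges at valence-one vertices and homotoping across bivalent ones), form the non-negative transition matrix $M(f)$ whose $(i,j)$-entry counts the number of times $f(e_j)$ crosses $e_i$. Irreducibility of $\varphi$ ensures $M(f)$ may be taken irreducible as a matrix: otherwise the block decomposition of $M(f)$ produces a proper $f$-invariant subgraph whose core carries a free factor permuted by $\varphi$, contradicting the hypothesis on $\varphi$. By Perron--Frobenius there is a largest eigenvalue $\lambda(f) \geq 1$ with strictly positive eigenvector $v$, and metrizing $\Gamma$ by $l(e_i) = v_i$ makes $f$ uniformly $\lambda(f)$-expanding on each edge.

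Next I would minimize. On the class $\mathcal{T}$ of all such topological representatives of $\varphi$ (irreducible transition matrix, no valence-one or valence-two vertices), the pair $(\rk(\Gamma), \lambda(f))$ takes integer-real values bounded below, so I would pick $f \in \mathcal{T}$ lexicographically minimizing it. The goal is to show that any such minimizer is a train track map. Suppose not; then some iterate $f^n(e)$ fails to be reduced, which forces the existence of an illegal turn at some vertex $v$, that is, two distinct oriented edges $e, e'$ issuing from $v$ with the same $Df$-image. I would then fold the initial segments of $e$ and $e'$ in the sense of Stallings, subdivide as needed so the resulting map is simplicial, tighten, and collapse any $f$-invariant forest that appears. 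The resulting $f' \in \mathcal{T}$ has either $\rk(\Gamma') < \rk(\Gamma)$ or $\lambda(f') < \lambda(f)$, contradicting minimality.

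The main obstacle, and the technical heart of Bestvina and Handel's argument, is verifying that the fold-subdivide-collapse loop actually achieves a strict decrease in complexity. A single fold need not decrease $\lambda$: one must analyze carefully how $M(f)$ transforms under the elementary moves and show that after finitely many of them $\lambda$ strictly drops, or else a collapse strictly drops the rank. A secondary difficulty is preserving irreducibility of the transition matrix throughout the process, which may fail transiently and necessitates further subgraph collapses. Once $f$ has no illegal turns at any vertex, the derivative $Df$ is injective on the set of directions at each vertex; by induction the same holds for $Df^n$ for every $n$, which prohibits any $f^n(e)$ from containing a backtrack, and this is precisely the train track condition in Definition of train track map above.
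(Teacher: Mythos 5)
The paper itself offers no proof of this statement: it is imported as a black box from Bestvina--Handel, so the only thing to compare your proposal with is their original argument, which is indeed the strategy you sketch. As a proof, however, the proposal has genuine gaps. First, the existence of your minimizer is not automatic: $\lambda(f)$ ranges over an infinite set of reals, so ``pick $f$ lexicographically minimizing $(\rk(\Gamma),\lambda(f))$'' requires an argument. Bestvina and Handel obtain it from the facts that a graph with $\pi_1 \cong F_N$ and no valence-one or valence-two vertices has at most $3N-3$ edges, and that there are only finitely many irreducible nonnegative integer matrices of bounded size whose Perron--Frobenius eigenvalue lies below a given bound; without this finiteness the infimum need not be attained and the descent scheme proves nothing. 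Second, the rank component of your complexity is vacuous: every graph occurring is a marked graph for $F_N$, and collapsing an invariant forest is a homotopy equivalence, so $\rk(\Gamma') < \rk(\Gamma)$ never occurs. Hence the entire burden falls on showing that the fold--subdivide--tighten--collapse cycle strictly decreases $\lambda$ while the transition matrix can be kept (or restored) irreducible without increasing $\lambda$ --- and this is precisely the step you label ``the main obstacle'' and leave unproved. What remains is therefore an outline of the known proof rather than a proof.

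A smaller but real error is in your closing paragraph: a train track map need not have $Df$ injective on the set of directions at each vertex; illegal turns are permitted, they just may not be crossed by any image $f^n(e)$. In the contradiction scheme you set up, the correct conclusion when the minimizer admits no further move is simply that no reduced iterate $f^k(e)$ crosses a turn with degenerate $Df$-image, from which all $f^n(e)$ are reduced; the stronger assertion that the minimizer has no illegal turns at all is generally false and is not what the minimization delivers.
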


If all positive powers of $\varphi$ are irreducible, then we say that $\varphi$ is \emph{irreducible with irreducible powers} (iwip for short).  Thus $\varphi$ is fully irreducible if and only if for all $k \geq 1$, $\varphi^k$ does not preserve the conjugacy class of a proper free factor.  In what follows, we use train track maps $f \colon \Gamma \to \Gamma$ representing iwip automorphisms in $\Out(F_N)$.  The existence of such automorphisms -- for every rank $N \geq 2$ -- is well known (see, for example, \cite{MR1052571,MR2401624})

\subsection{Bestvina-Feighn-Handel Laminations}

Fix a free basis $\mathcal{A}$ of $F_N$.  Let $\partial F_N$ be the \emph{Gromov boundary} of the word hyperbolic group, $F_N$; that is, $\partial F_N := \{ a_1a_2 \dots \mid a_i \in \mathcal{A}^{\pm} , a_i^{-1} \neq a_{i+1} \}$.  The \emph{double boundary} of $F_N$, denoted by $\partial^2F_N$, is defined by $\partial^2F_N := (\partial F_N \times \partial F_N) \backslash \Delta$, where $\Delta \subset \partial F_N \times \partial F_N$ consists of those pairs $(\zeta_1,\zeta_2)$ for which $\zeta_1 = \zeta_2$.  More generally, given a marked graph $(\Gamma,\tau)$, one defines $\partial^2 \widetilde{\Gamma}$ by considering instead one sided infinite reduced edgepaths in $\Gamma$.

\begin{defn}[Bestvina-Feighn-Handel lamination]\label{bfhl}
Let $\varphi \in \Out(F_N)$ be a fully irreducible automorphism equipped with a train track map $f \colon \Gamma \to \Gamma$.  The \emph{Bestvina-Feighn-Handel lamination} associated to $\varphi \in \Out(F_N)$, denoted by $\mathcal{L}_{BFH}(\varphi,f,\Gamma)$, is the set of pairs $(\zeta_1, \zeta_2) \in \partial^2 \widetilde{\Gamma}$ which have the following property: for every finite subpath $[z_1,z_2] \Subset (\zeta_1,\zeta_2)$ there exists an $e \in E(\Gamma)$ and an $n \geq 1$ so that $f^n(e) \Supset \pi([z_1,z_2])$.  Here $\pi \colon \partial^2 \widetilde{\Gamma} \to \Gamma$ is the ``labelling'' map which, on each coordinate, coincides with projection from the universal covering, $\pi \colon \widetilde{\Gamma} \to \Gamma$.  Such a pair $(\zeta_1,\zeta_2)$ is called a \emph{leaf} of the lamination.
\end{defn}

If $f \colon \Gamma \to \Gamma$ is a train track representing an iwip, then given any pair of edges $e, e' \in E(\Gamma)$, there exists an $n \geq 1$ so that either $f^n(e) \Supset e'$ or $f^n(e^{-1}) \Supset e'$.  In what follows, we will always assume that we have passed to a power of $\varphi$ so that this is indeed the case.  Definition \ref{bfhl} can be formulated (equivalently) in terms of a fixed edge $e \in E(\Gamma)$.  Furthermore, the leaves of the Bestvina-Feighn-Handel lamination satisfy a certain ``recurrence'' property, which we now formulate precisely.

\begin{defn}[Quasiperiodicity]\label{qp}
A leaf $(\zeta_1,\zeta_2)$ of $\mathcal{L}_{BFH}(\varphi,f,\Gamma)$ is said to be \emph{quasiperiodic} if for every $L > 0$, there exists $L' > L$ so that the following holds: if $[z_1,z_2]$, and $[w_1,w_2]$ are subpaths of $(\zeta_1,\zeta_2)$ for which $| [z_1,z_2] | > L'$ and $| [w_1,w_2] | < L$, then $\pi([w_1,w_2]) \Subset \pi([z_1,z_2])$ (here $\pi \colon \widetilde{\Gamma} \to \Gamma$ is the labeling map).
\end{defn}

\begin{prop}[{Bestvina, Feighn, and Handel \cite[Proposition 1.8]{MR1445386}}]\label{BFH1.8}
Every leaf $(\zeta_1,\zeta_2)$ of $\mathcal{L}_{BFH}(\varphi,f,\Gamma)$ is quasiperiodic.  \qed
\end{prop}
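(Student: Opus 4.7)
The plan is to combine the primitivity of the train track map (available because we have passed to a power of $\varphi$) with the finite number of short subwords in $\Gamma$, so as to compare short subpaths of the leaf against blocks obtained from deep iteration.

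First I would observe that, under the standing assumption preceding Definition \ref{qp}, we have $f^n(e) \Supset e'$ for every pair of edges $e, e' \in E(\Gamma)$ and every $n \geq 1$; moreover the lengths $|f^n(e)|$ grow geometrically in the Perron-Frobenius eigenvalue $\lambda > 1$ of the transition matrix of $f$. Fix a leaf $(\zeta_1, \zeta_2)$ of $\mathcal{L}_{BFH}(\varphi, f, \Gamma)$ and let $L > 0$. Let $\mathcal{S}_L$ be the (finite) collection of reduced edgepaths in $\Gamma$ of length less than $L$ that arise as $\pi([w_1, w_2])$ for some subpath $[w_1, w_2] \Subset (\zeta_1, \zeta_2)$. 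For each $\sigma \in \mathcal{S}_L$, Definition \ref{bfhl} produces an edge $e_\sigma$ and an integer $n_\sigma \geq 1$ with $f^{n_\sigma}(e_\sigma) \Supset \sigma$.

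Next I would promote these pointwise data to a uniform level. Put $N = 1 + \max_{\sigma \in \mathcal{S}_L} n_\sigma$; by primitivity, $f^{N - n_\sigma}(e) \Supset e_\sigma$ for any edge $e$, so, using that $f$ is a train track (hence iteration of a reduced path is reduced),
\[
f^{N}(e) \;=\; f^{n_\sigma}\bigl(f^{N - n_\sigma}(e)\bigr) \;\Supset\; f^{n_\sigma}(e_\sigma) \;\Supset\; \sigma
\]
for every edge $e$ and every $\sigma \in \mathcal{S}_L$. Now set $L' := 2 \max_{e \in E(\Gamma)} |f^{N}(e)|$ and take any subpath $[z_1, z_2] \Subset (\zeta_1, \zeta_2)$ with $|[z_1, z_2]| > L'$. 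Applying Definition \ref{bfhl} again yields $e', n'$ with $\pi([z_1, z_2]) \Subset f^{n'}(e')$; since $|f^{n'}(e')| > L' \geq 2|f^{N}(e)|$, the geometric growth forces $n' > N$. Writing the reduced decomposition $f^{n' - N}(e') = e_1 \cdots e_k$, we obtain the block decomposition $f^{n'}(e') = f^{N}(e_1) \cdots f^{N}(e_k)$, and because $|\pi([z_1,z_2])| > 2 \max_i |f^{N}(e_i)|$, the subword $\pi([z_1, z_2])$ of $f^{n'}(e')$ must contain at least one complete block $f^{N}(e_i)$. For any $[w_1, w_2]$ with $|[w_1, w_2]| < L$ we have $\pi([w_1, w_2]) \in \mathcal{S}_L$, hence $\pi([w_1, w_2]) \Subset f^{N}(e_i) \Subset \pi([z_1, z_2])$, proving quasiperiodicity.

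The main obstacle is the uniformization step: Definition \ref{bfhl} delivers only $\sigma$-dependent data $(e_\sigma, n_\sigma)$, whereas quasiperiodicity requires a single threshold that works for all short subwords simultaneously. This is exactly where the primitivity of the transition matrix (guaranteed by the iwip assumption after passing to a power) is indispensable; without it one could not collapse the finitely many exponents $n_\sigma$ into a single $N$ that works for every edge $e$. The rest is bookkeeping: once blocks of length $|f^{N}(e)|$ dominate every short subword, choosing $L'$ to be twice the maximum block length guarantees that any longer subpath of the leaf swallows a whole block, and hence every short subword appearing anywhere in the leaf.
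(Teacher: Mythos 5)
The paper itself offers no proof of this proposition --- it is quoted from Bestvina--Feighn--Handel with a \qed --- so the comparison is with the standard argument, and your skeleton is exactly that standard argument: collect the finitely many short leaf-subwords $\mathcal{S}_L$, uniformize the exponents $n_\sigma$ to a single $N$, decompose $f^{n'}(e') = f^N(e_1)\cdots f^N(e_k)$ with no cancellation between blocks (this is legitimate because $f^{n'}(e')$ is reduced; note it is this, not your parenthetical claim that ``iteration of a reduced path is reduced,'' that the train track property actually gives --- arbitrary reduced paths may have unreduced images), use monotonicity of $n \mapsto |f^n(e')|$ to force $n' > N$, and pigeonhole with $L' = 2\max_e |f^N(e)|$ to swallow a complete block. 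All of that is sound.

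The genuine gap is your opening ``observation.'' It is not true that after passing to a power one has $f^n(e) \Supset e'$ for every ordered pair of \emph{oriented} edges and every $n \geq 1$. The standing assumption is only that $f(e)$ crosses every \emph{topological} edge, i.e.\ $f(e)$ contains $e'$ or $e'^{-1}$, and this cannot in general be upgraded: for an orientable iwip, e.g.\ the train track $a \mapsto ab$, $b \mapsto a$ on the rose (or any power of it, which is what the paper passes to), every $f^n(a)$ is a positive word and never contains $a^{-1}$. With the correct hypothesis, your uniformization step only yields that each block $f^N(e_i)$ contains $\sigma$ or $\sigma^{-1}$, hence that a long leaf segment contains $\pi([w_1,w_2])$ or its inverse --- weaker than Definition \ref{qp}, where $\Subset$ is genuine oriented subpath containment. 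Closing this requires an additional orientation argument that your proof never addresses: for instance, show that either a further power of $f$ maps every oriented edge over every oriented edge (non-orientable case), or else the oriented edges split into two $f$-invariant classes $E^+ \sqcup E^-$, each leaf is coherently oriented so that all of its finite subpaths are carried by iterates of edges of a single class, and then the edges $e'$, $e_i$, $e_\sigma$ appearing in your argument all lie in that class, where crossing a topological edge does give containment with the correct orientation. (The orientation-insensitive conclusion would actually suffice for the paper's later use in Proposition \ref{1}, since factoring through $X^T_H$ is insensitive to inversion, but it is not the proposition as stated.)
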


\begin{defn}[Carrying a leaf]\label{carries}
Let $H \leq F_N$ be a finitely generated subgroup.  Let $\varphi \in \Out(F_N)$ be a fully irreducible automorphism equipped with a train track map $f \colon \Gamma \to \Gamma$ (the marking on $\Gamma$ being $\tau \colon R_N \to \Gamma$).  Let $T = (\Gamma,\tau,l)$.  Let $X^T_H$ be the Stallings subgroup graph corresponding to the (conjugacy class of the) subgroup $\tau(H) \leq \pi_1(\Gamma,\tau(\ast))$.  We say that (the conjugacy class of) $H$ \emph{carries} the leaf $(\zeta_1,\zeta_2)$ of $\mathcal{L}_{BFH}(\Phi,f,\Gamma)$ if for every finite subpath $[z_1,z_2]$ of $(\zeta_1,\zeta_2)$, the map
\[
\pi|_{[z_1,z_2]} \colon [z_1,z_2] \to \Gamma
\]
factors through $X^T_H$ as a map $[z_1,z_2] \to X^T_H \to \Gamma$.
\end{defn}

Of particular interest to us is the following proposition.

\begin{prop}[{Bestvina, Feighn, and Handel \cite[Lemma 2.4]{MR1445386}}]\label{BFH2.4}
Let $\varphi \in \Out(F_N)$ be a fully irreducible automorphism equipped with a train track $f \colon  \Gamma \to \Gamma$.  If a finitely generated subgroup $H \leq F_N$ carries a leaf of $\mathcal{L}_{BFH}(\varphi,f,\Gamma)$, then $[F_N : H] < \infty$.  \qed
\end{prop}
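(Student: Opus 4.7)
I would establish the contrapositive: assuming $H \leq F_N$ is finitely generated with $[F_N : H] = \infty$, I show that $H$ cannot carry any leaf of $\mathcal{L}_{BFH}(\varphi,f,\Gamma)$.

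First, I replace $\varphi$ by a high enough power so that the transition matrix of $f \colon \Gamma \to \Gamma$ is primitive. This guarantees that for every pair of edges $e, e' \in E(\Gamma)$ and all sufficiently large $n$, $e' \Subset f^n(e)$, so each patch $f^n(e)$ for $n$ large traverses every edge of $\Gamma$ and realizes every legal turn of the train track. Passing to a power does not alter the BFH lamination, so this is a free move. For any putative leaf $(\zeta_1,\zeta_2) \in \mathcal{L}_{BFH}(\varphi,f,\Gamma)$, the defining property of the lamination places each finite subpath of the projection $\pi(\zeta_1,\zeta_2)$ inside some $f^n(e)$; combined with Proposition \ref{BFH1.8} (quasiperiodicity), this forces the bi-infinite edgepath $\pi(\zeta_1,\zeta_2)$ to traverse every edge of $\Gamma$ -- and in fact every legal turn of $(f,\Gamma)$ -- with uniformly bounded return times.

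Now assume $H$ carries $(\zeta_1,\zeta_2)$. Since any reduced edgepath entering a hanging tree of $(X^T_H)_r$ cannot return to $X^T_H$ without backtracking, the carrying hypothesis forces $\pi(\zeta_1,\zeta_2)$ to lift to a bi-infinite reduced edgepath $\tilde\ell$ entirely inside the finite graph $X^T_H$. Because $[F_N:H] = \infty$, some vertex $v \in V(X^T_H)$ is missing a half-edge $h$ of $\pi(v)$ in $\Gamma$. After shrinking $H$ iteratively -- first to the finitely generated, still infinite-index subgroup represented by the subgraph of $X^T_H$ traversed by $\tilde\ell$, then refining further among the half-edges incident to each used vertex -- I may assume every edge at every vertex of the current $X^T_H$ is visited infinitely often by $\tilde\ell$, while $X^T_H$ remains non-$E(\Gamma)$-regular at some vertex $v$. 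The train track hypothesis guarantees that $h$ participates in some legal turn $\tau$ at $\pi(v)$. By the previous paragraph $\tau$ appears in $\pi(\zeta_1,\zeta_2)$ with bounded gap, and pigeonholing the finitely many preimages of $\pi(v)$ in $X^T_H$ against this recurrence forces some occurrence of $\tau$ to be lifted at $v$, requiring $\tilde\ell$ to traverse $h$ at $v$ -- a contradiction.

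\textbf{Main obstacle.} The most delicate piece is the saturation step: iteratively pruning $X^T_H$ to obtain a finitely generated, infinite-index subgroup whose Stallings graph remains non-$E(\Gamma)$-regular but for which every half-edge present is visited infinitely often by the leaf's lift, so that the pigeonhole on the final vertex $v$ actually succeeds. Verifying that each pruning step preserves the hypotheses (finite generation, infinite index, carrying the leaf) and that the iteration terminates -- which it does, by finiteness of $X^T_H$ -- is where the work lives. Once the saturated setup is in place, the contradiction via bounded-gap recurrence is routine; the preliminary moves (passing to a power, unpacking the lamination definition, lifting the leaf to $X^T_H$) are standard.
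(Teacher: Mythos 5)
Note first that the paper does not prove this proposition at all: it is stated with the proof omitted, as a quotation of Bestvina--Feighn--Handel \cite[Lemma 2.4]{MR1445386}, so there is no in-paper argument to compare against; your attempt must be judged on its own. It has a genuine gap at the final step. Recurrence of the turn $\tau$ in the leaf only guarantees that occurrences of $\tau$ lift to \emph{some} preimages of $\pi(v)$ in the Stallings graph, and those preimages do carry both half-edges of $\tau$; nothing in the pigeonhole forces an occurrence of $\tau$ to be read at the deficient vertex $v$ itself. Your saturation step does not repair this: a vertex can be visited infinitely often by $\tilde\ell$ while the missing direction is simply never used there, and that is not a contradiction.

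More fundamentally, the only features of the leaf your outline uses are quasiperiodicity and the fact that it crosses every edge (and every turn it contains) of $\Gamma$ with bounded gaps, and these properties alone cannot imply finite index. For instance, take a finite cover of $\Gamma$ and delete one edge: the resulting folded finite graph is the core graph of a finitely generated subgroup of infinite index, yet it carries bi-infinite reduced lines that are uniformly recurrent and whose projections cross every edge of $\Gamma$, and essentially every turn, with bounded gaps (an aperiodic concatenation of two filling circuits, or even the periodic line over a single filling loop $w$, carried by $\langle w \rangle$). Such lines satisfy every hypothesis your argument actually invokes, so the argument would ``prove'' a false statement; the real content of the lemma is that leaves of an iwip lamination are not of this kind, and any correct proof must use further dynamical input (e.g.\ the exponential growth of the segments $f^n(e)$, non-carrying by proper free factors together with Marshall Hall's theorem, or North--South dynamics), none of which appears in the outline. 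Two smaller issues: $f^n(e)$ realizes only the turns taken by iterated edge images, not necessarily every legal turn; and producing a single coherent bi-infinite lift of the leaf into $X^T_H$ from Definition \ref{carries}, which only lifts each finite subpath separately, requires a K\"onig's lemma argument that should be stated.
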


\subsection{Stability of Quasi-Geodesics}  We will also need some basic results about stability of quasi-geodesics.

\begin{prop}[{Bridson and Haefliger \cite[III.H Theorem 1.7]{MR1744486}}]\label{BH_1.7}
For all $\delta > 0, \lambda \geq 1, \epsilon \geq 0$ there exists a constant $R = R(\delta,\lambda,\epsilon)$ with the following property: If $X$ is a $\delta$-hyperbolic geodesic space, $c$ is a $(\lambda,\epsilon)$-quasi-geodesic in $X$ and $[p,q]$ is a geodesic segment joining the endpoints of $c$, then the Hausdorff distance between $[p,q]$ and the image of $c$ is less than $R$.
\end{prop}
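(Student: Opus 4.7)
The statement is the classical Morse lemma (stability of quasi-geodesics) in a $\delta$-hyperbolic geodesic space, so I would follow Gromov's strategy essentially as executed in Bridson--Haefliger. The first step is to reduce to a \emph{tame} quasi-geodesic. Given the $(\lambda,\epsilon)$-quasi-geodesic $c \colon [a,b] \to X$, I sample $c$ at the integer times and join consecutive samples by geodesic segments to obtain a continuous path $c' \colon [a,b] \to X$ whose image lies within Hausdorff distance at most $\lambda + \epsilon$ of $\mathrm{im}(c)$ and whose arc length on any sub-interval $[s,t]$ is bounded linearly in $d(c'(s),c'(t))$, with constants depending only on $\lambda$ and $\epsilon$. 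It then suffices to prove the Hausdorff bound for $c'$ and $[p,q]$.

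Next I would show that $[p,q]$ lies in a bounded neighborhood of $\mathrm{im}(c')$. Arguing by contradiction, suppose some $x_{0} \in [p,q]$ satisfies $d(x_{0}, \mathrm{im}(c')) > D$ for $D$ to be chosen. Let $[p',q'] \subseteq [p,q]$ be the maximal sub-segment through $x_{0}$ lying outside the open $D$-neighborhood of $\mathrm{im}(c')$; then $p'$ and $q'$ are at distance exactly $D$ from points $a',b' \in \mathrm{im}(c')$. Close up a quadrilateral $\gamma$ consisting of $[p',q']$, the two geodesics $[p',a']$ and $[q',b']$ of length $D$, and the sub-arc of $c'$ between $a'$ and $b'$. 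The length of the $c'$-arc is a linear function of $d(a',b') \leq d(p',q')+2D$, so the perimeter $\ell$ of $\gamma$ is linear in $d(p',q')+D$. Iterating $\delta$-thinness of triangles shows that every point on a rectifiable loop of perimeter $\ell$ lies within $\delta(1+\log_{2} \ell)$ of the rest of the loop; applying this at $x_{0}$ (which is at distance $>D$ from the $c'$-side and $\geq D$ from the other two sides of $\gamma$) yields $D \leq \delta(1+\log_{2} \ell) + O(1)$. Combined with the linear bound on $\ell$, this forces $D$ to be bounded by a constant $R_{1} = R_{1}(\delta,\lambda,\epsilon)$.

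For the reverse inclusion $\mathrm{im}(c') \subseteq N_{R_{2}}([p,q])$, I pick any $s \in [a,b]$ and take the largest $s_{1} \leq s$ and smallest $s_{2} \geq s$ with $c'(s_{i})$ within $R_{1}$ of $[p,q]$; both exist by the previous paragraph applied at the endpoints $p,q$. The tame property bounds the arc length of $c'$ on $[s_{1},s_{2}]$ linearly in $|s_{2}-s_{1}|$, while the nearest-point projections of $c'(s_{1}),c'(s_{2})$ to $[p,q]$ are close (using $R_{1}$ and hyperbolicity), producing an independent upper bound on $d(c'(s_{1}),c'(s_{2}))$ and hence on $|s_{2}-s_{1}|$ and on $d(c'(s),[p,q])$. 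Taking $R = \max(R_{1},R_{2}) + \lambda + \epsilon$ then works for the original $c$. The main obstacle is the logarithm-in-perimeter thinness estimate underlying the second paragraph: one derives it by iteratively applying the $\delta$-thin triangle hypothesis to a subdivision of the polygon, and carefully tracking the dependence on $\lambda$ and $\epsilon$ through the tame reduction is what produces the explicit $R(\delta,\lambda,\epsilon)$.
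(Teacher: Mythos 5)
The paper does not actually prove this statement: it is quoted from Bridson--Haefliger (III.H Theorem 1.7) as a black box, so the only question is whether your sketch of the standard Morse lemma argument is sound. Your outline (taming, the logarithmic thinness estimate, then the connectedness/arc-length argument for the reverse inclusion) is the right one, but the central step in your second paragraph has a genuine gap coming from your choice of quadrilateral. You cut $[p,q]$ at the points $p',q'$ where it exits the open $D$-neighborhood of $\mathrm{im}(c')$. First, $x_0$ may lie arbitrarily close to the corner $p'$, so the assertion that $x_0$ is at distance $\geq D$ from the side $[p',a']$ is unjustified: a point $w \in [p',a']$ near $p'$ can have $d(x_0,w)$ as small as you like, and then applying the thinness estimate at $x_0$ yields no lower bound of size $D$. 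Second, and independently, your perimeter bound is linear in $d(p',q')+D$, and $d(p',q')$ --- the length of the excursion of $[p,q]$ outside the $D$-neighborhood --- is not a priori controlled in terms of $D$; so even granting the distance estimate, the inequality $D \le \delta(1+\log_2 \ell)+O(1)$ does not force $D$ to be bounded by a function of $(\delta,\lambda,\epsilon)$ alone. The standard repair (and what Bridson--Haefliger actually do) is to set $D=\sup_{x\in[p,q]} d(x,\mathrm{im}(c'))$, pick $x_0$ (nearly) realizing it, and cut $[p,q]$ at points $y,z$ at distance $2D$ from $x_0$ on either side (or at $p,q$); joining $y,z$ to points $y',z'$ of $\mathrm{im}(c')$ at distance $\le D$ forces every point of the two connecting segments to lie at distance $\ge 2D-D=D$ from $x_0$, and tameness makes the whole comparison path have length $O(D)$, whence $D \le \delta\log_2(O(D))+O(1)$ and $D$ is bounded.

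A smaller problem occurs in your third paragraph: the claim that the nearest-point projections of $c'(s_1)$ and $c'(s_2)$ to $[p,q]$ are close ``using $R_1$ and hyperbolicity'' is not justified and is not how the bound is usually obtained. The standard argument is a connectedness argument: the subsegment of $[p,q]$ between those projections lies in the $R_1$-neighborhood of $\mathrm{im}(c')$ but cannot meet the $R_1$-neighborhood of $c'((s_1,s_2))$ by the choice of $s_1,s_2$, so it is covered by the closed $R_1$-neighborhoods of $c'([a,s_1])$ and $c'([s_2,b])$; connectedness then gives a point of $[p,q]$ within $R_1$ of both pieces, hence parameters $t_a \le s_1 \le s_2 \le t_b$ with $d(c'(t_a),c'(t_b)) \le 2R_1$, and tameness bounds the arc length of $c'$ over $[t_a,t_b]$ and therefore $d(c'(s),[p,q])$. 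With these two repairs your argument becomes the standard proof of the cited result.
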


\begin{prop}\label{diam}
Let $X,Y$ be $\delta$-hyperbolic geodesic metric spaces equipped with a $(\lambda,\epsilon)$-quasi-isometry $f \colon X \to Y$.  Suppose $A, B$ are collections of geodesic paths in $X$ with the following property, there exists a constant $C$ such that for any $\alpha \in A$ and $\beta \in B$, $\diam_X(\alpha \cap \beta) < C$.  Let $\alpha' = [f(\alpha)], \beta'=[f(\beta)]$ be geodesics in $Y$ obtained by reducing the images $f(\alpha), f(\beta)$, respectively.  The there exists a constant $C'$ such that for all $\alpha \in A, \beta \in B$, we have $\diam_Y(\alpha' \cap \beta') < C'$.  
\end{prop}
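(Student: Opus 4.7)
The plan is to argue by contrapositive: if $\diam_Y(\alpha' \cap \beta')$ is unbounded as $\alpha \in A, \beta \in B$ vary, I will produce a correspondingly large subsegment of $\alpha \cap \beta$ in $X$, contradicting $\diam_X(\alpha \cap \beta) < C$. The main tool is Proposition \ref{BH_1.7}: applied to the $(\lambda,\epsilon)$-quasi-geodesics $f(\alpha), f(\beta) \subset Y$, it yields a constant $R = R(\delta,\lambda,\epsilon)$ such that $\alpha'$ has Hausdorff distance at most $R$ from $f(\alpha)$, and likewise $\beta'$ from $f(\beta)$.

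Given $p, q \in \alpha' \cap \beta'$ with $d_Y(p,q) \geq D$, I would use this Hausdorff bound to choose $x_1, x_2 \in \alpha$ and $y_1, y_2 \in \beta$ with $f(x_1), f(y_1)$ within $R$ of $p$ and $f(x_2), f(y_2)$ within $R$ of $q$. The upper quasi-isometry inequality applied to $d_Y(f(x_i),f(y_i)) \leq 2R$ gives $d_X(x_i,y_i) \leq \lambda(2R+\epsilon) =: K$, and the lower inequality applied to $d_Y(f(x_1),f(x_2)) \geq D - 2R$ gives $d_X(x_1,x_2) \geq (D - 2R - \epsilon)/\lambda$.

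This leaves the step of converting a long subsegment $[x_1,x_2] \subset \alpha$ whose endpoints are $K$-close to points $y_1,y_2$ on $\beta$ into a long subsegment of $\alpha \cap \beta$; for $D$ chosen larger than $\lambda C + 2R + \epsilon$ plus a constant depending on $K$ and $\delta$, this will contradict $\diam_X(\alpha \cap \beta) < C$ and yield the desired $C'$. In the tree settings most relevant to this paper (e.g., universal covers of Stallings subgroup graphs, or points of $\overline{\cv}_N$), the conversion is clean: the nearest-point projections of $y_1, y_2$ onto $\alpha$ lie within $K$ of $x_1, x_2$, and the geodesic $[y_1,y_2]$ must pass through the intervening arc of $\alpha$, placing it inside $\alpha \cap \beta$. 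I expect this last step to be the main subtlety, since in a general $\delta$-hyperbolic space thin triangles produce only fellow-traveling of $[x_1,x_2]$ and $[y_1,y_2]$ at distance $O(K+\delta)$ rather than literal overlap, so at full generality one must interpret $\cap$ coarsely or use additional structure of the ambient spaces.
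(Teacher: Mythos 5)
Your argument is correct and is essentially the paper's own proof: both run by contradiction, invoke Proposition \ref{BH_1.7} to trap $\alpha'$, $\beta'$ in the $R$-neighborhoods of $f(\alpha)$, $f(\beta)$, and then play the two quasi-isometry inequalities against the hypothesis $\diam_X(\alpha \cap \beta) < C$. The subtlety you flag at the end is genuine but is present in the paper's proof as well: there the contradiction is drawn from points $a_i \in \alpha_i$, $b_i \in \beta_i$ far from $\alpha_i \cap \beta_i$ whose images are uniformly close, which requires exactly the tree-type fact you isolate (in a general $\delta$-hyperbolic space two geodesics can fellow-travel with small or empty literal intersection), and in the setting where the proposition is actually applied both $X$ and $Y$ are trees, where your conversion of a long $K$-close stretch into literal overlap is valid.
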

\begin{proof}
Let $R=R(0,\lambda,\epsilon)$ be the constant afforded by Proposition \ref{BH_1.7}.  Let $\{ \alpha_i \}, \{ \beta_i \} \subset X$ and and $\{ \alpha'_i \}, \{ \beta'_i \} \subset Y$ be arbitrary sequences of geodesics (with $\alpha'_i, \beta'_i$ contained in the $R$-neighborhood of $f(\alpha_i), f(\beta_i)$, respectively).  Suppose without loss that $l_X(\alpha_i), l_X(\beta_i) \to \infty$ (the proposition is obvious otherwise).  Since $R$ is finite and since $f$ is a quasi-isometry, we must have that $l_X(\alpha_i), l_X(\beta_i) \to \infty$.  Similarly, the distance (in $Y$) between $f(\alpha_i \cap \beta_i)$ and $\alpha'_i \cap \beta'_i$ is uniformly bounded (independent of $i$).  Suppose for contradiction that no such $C'$ exists; that is, $\diam_Y(\alpha'_i \cap \beta'_i) \to \infty$.  Then (by passing to a subsequence and reindexing) we may find a sequence of points $a_i \in \alpha_i, b_i \in \beta_i$ with $d_X(a_i,\alpha_i \cap \beta_i), d_X(b_i,\alpha_i \cap \beta_i) \to \infty$ for which $d_Y(f(a_i),f(b_i))$ is uniformly bounded.  This contradicts the fact that $f$ is a quasi-isometry.
\end{proof}

\begin{cor}\label{qi}
Let $(\Gamma,\tau)$ be a marked graph.  Suppose $z$ is a loop in $G$ which represents a primitive element of $\pi_1(\Gamma)$.  Let $A$ be an arbitrary collection of loops in $G$.  Suppose there exists a number $M$ such that for all $a \in A$ whenever $z^k \Subset a$ it follows that $|k| \leq M$.  Then for any free basis $\mathcal{B}$ with $z \in \mathcal{B}$ and a number $M'$ such that for all $a \in A$ whenever $z^k \Subset [a]_\mathcal{B}$ it follows that $|k| \leq M'$.
\end{cor}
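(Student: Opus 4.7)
The plan is to apply Proposition \ref{diam} to the $F_N$-equivariant quasi-isometry $f \colon \widetilde{\Gamma} \to \widetilde{R}_{\mathcal{B}}$ obtained by lifting the change-of-marking homotopy equivalence $R_{\mathcal{B}} \to \Gamma$. Both universal covers are simplicial trees, hence $0$-hyperbolic geodesic metric spaces, so the ambient hypotheses of Proposition \ref{diam} are met, with $(\lambda,\epsilon)$ determined by the bi-Lipschitz constants between the two simplicial metrics on $F_N$.

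I would first reinterpret the hypothesis geometrically. Fix a lift $p \in \widetilde{\Gamma}$ of the basepoint. For each loop $a \in A$, let $\alpha_a$ be the geodesic from $p$ to $a \cdot p$ in $\widetilde{\Gamma}$, i.e., the lift of the reduced loop representing $a$ in $\Gamma$. For each $w \in F_N$, let $\beta_w$ be the axis of $wzw^{-1}$ acting on $\widetilde{\Gamma}$; since $z$ is primitive this axis is a bi-infinite geodesic line. Writing $\ell = \|z\|_{\widetilde{\Gamma}}$, an occurrence $z^k \Subset a$ in $\Gamma$ corresponds exactly to a subsegment of length $k\ell$ contained in $\alpha_a \cap \beta_w$ for some $w$, so the hypothesis $|k| \leq M$ becomes the uniform bound $\diam(\alpha_a \cap \beta_w) \leq M\ell$ over all $a \in A$ and $w \in F_N$.

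I would then apply Proposition \ref{diam} (approximating each bi-infinite $\beta_w$ by finite subsegments of arbitrary length to meet the letter of its statement) to obtain a constant $C'$ such that $\diam([f(\alpha_a)] \cap [f(\beta_w)]) < C'$ in $\widetilde{R}_{\mathcal{B}}$, uniformly in $a$ and $w$. Because $\widetilde{R}_{\mathcal{B}}$ is a tree, $[f(\alpha_a)]$ is the unique geodesic from $f(p)$ to $a \cdot f(p)$, which is precisely the lift of the reduced word $[a]_{\mathcal{B}}$. Since $z \in \mathcal{B}$, the axis of $wzw^{-1}$ in $\widetilde{R}_{\mathcal{B}}$ is a genuine geodesic line whose image in $R_{\mathcal{B}}$ cycles around the single petal labelled $z$; stability of quasi-geodesics (Proposition \ref{BH_1.7}) forces $[f(\beta_w)]$ to lie within bounded Hausdorff distance of, hence to coincide up to finite truncation with, a translate of this axis. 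Any occurrence $z^k \Subset [a]_{\mathcal{B}}$ then produces a segment of length $k$ in $[f(\alpha_a)] \cap [f(\beta_w)]$, yielding $k < C'$; taking $M' = \lceil C' \rceil$ completes the argument.

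The main obstacle I anticipate is bookkeeping endpoint effects. The path $\alpha_a$ runs from $p$ to $a \cdot p$ rather than along the axis of $a$, so it may include short ``bridge'' segments which must be accounted for when translating between combinatorial subword occurrences and geodesic intersections; similarly, the bi-infinite axes $\beta_w$ must be truncated before Proposition \ref{diam} applies, and stability of quasi-geodesics contributes an additive error $R$. Each of these introduces only a bounded additive correction that can be absorbed into $C'$, but carrying out this accounting cleanly, and verifying that combinatorial occurrences $z^k \Subset [a]_{\mathcal{B}}$ really do inject into the geodesic intersection on the nose (not merely up to the stability constant), is the most delicate part of the argument.
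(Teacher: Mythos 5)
Your proposal is correct and follows essentially the same route as the paper: interpret the hypothesis as a uniform bound on the diameters of intersections of lifts of the loops in $A$ with translates of the axis of $z$ in $\widetilde{\Gamma}$, push this through the change-of-marking quasi-isometry $\widetilde{\Gamma} \to \Cay(F_N,\mathcal{B})$ via Proposition \ref{diam}, and use $z \in \mathcal{B}$ to convert the resulting bound back into a bound on powers of $z$ occurring in $[a]_\mathcal{B}$. Your version is in fact more explicit than the paper's about the truncation of the bi-infinite axes and the bounded endpoint corrections, which the paper leaves implicit.
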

\begin{proof}
Let $Z = \{ z^k \mid k \in \mathbb{Z} \}$.  Let $f \colon \widetilde{\Gamma} \to \Cay(F_N,\mathcal{B})$ be a quasi-isometry with $\mathcal{B}$ any free basis containing $z$.  We may think of $Z$, $A$ as defining a collection of geodesics in $\widetilde{\Gamma}$.  The assumption on $z$ affords a constant, $C$, such that for any $\zeta \in Z$ and any $\alpha \in A$, we have $\diam_{\widetilde{\Gamma}}(\zeta \cap \alpha) < C$.  We now apply Proposition \ref{diam} and conclude that (in $\Cay(F_N,\mathcal{B})$) there is a bound, $C'$ on $\diam_{\Cay(F_N,\mathcal{B})}(\zeta',\alpha')$.  Thus there is a bound $M'$ so that if $z^k \Subset [a]_\mathcal{B}$, then $|k| \leq M'$.
\end{proof}

\subsection{Geodesic Currents}\label{GC}

A \emph{geodesic current} on $F_N$ is a positive Radon measure (a Borel measure that is finite on compact sets) on $\partial^2 F_N$ which is invariant under the diagonal action of $F_N$ and under the map which interchanges the coordinates of $\partial^2 F_N$.  The space of all currents is denoted $\Curr(F_N)$.  If $\nu, \mu \in \Curr(F_N)$ are (nontrivial) currents such that $\nu = \lambda \mu$ for some $\lambda \in \mathbb{R}^*$ then we write $[\nu] = [\mu]$, where $[\cdot]$ denotes the projective class of a given current.  To each root free conjugacy class $g \in F_N$ we associate a \emph{counting curent}, $\eta_g$ as follows.  Let $\mathcal{A}$ be a free basis of $F_N$.  Write $g$ as a cyclically reduced word over $\mathcal{A}$.  We can thus think of $g$ as the label of a directed graph, $\Gamma_g$, which is topologically homeomorphic to $\mathbb{S}^1$ and has $\|g\|_\mathcal{A}$ edges (and hence $\| g \|_\mathcal{A}$ vertices), each edge labelled by the appropriate $a_i \in \mathcal{A}$.  Now let $v$ be any freely reduced word.  By the number of occurrences of $v$ or $v^{-1}$ in $g$, denoted $n_g(v^\pm)$, we mean the number of vertices of $\Gamma_g$ at which one may read the word $v$ or $v^{-1}$ along $\Gamma_g$ (going `with' the oriented edges) without leaving $\Gamma_g$.  Note that $n_u(v^\pm) = n_{u^{-1}}(v^\pm)$ by definition.  Now let $\widetilde{v}$ be a lift of $v$ to the Cayley tree of $F_N$ with respect to $\mathcal{A}$.  Let $\Cyl(v) \subset \partial^2 F_N$ be the set of all $(\zeta_1,\zeta_2) \subset \partial^2(F_N)$ such that the bi-infinite geodesic representing $(\zeta_1,\zeta_2)$ passes through $\widetilde{v}$.  Then $\eta_g(Cyl(v)) = n_g(v^\pm)$.  If $g = h^k$ for $h$ a root free conjugacy class, we define $n_g(v^\pm) = k n_h(v^\pm)$.  Currents of the form $\lambda \eta_g$ for $\lambda > 0$ and $g \in F_N$ form a dense subset of $\Curr(F_N)$ \cite[Corollary 3.5]{MR2197815}.  For more information, see \cite{MR2216713,MR2496058,MR2585579,Martin}.  We will need the notion of an intersection form $\langle \cdot , \cdot \rangle \colon \overline{\cv}_N \times \Curr(F_N) \to \mathbb{R}$.

\begin{prop}[{Kapovich and Lustig \cite[Theorem A]{MR2496058}}]\label{KLA}
There exists a map $\langle \cdot , \cdot \rangle \colon \overline{\cv}_N \times \Curr(F_N) \to \mathbb{R}$ which is continuous, $\Out(F_N)$-invariant, and linear with respect to the second argument.  Furthermore for every $g \in F_N$, and $T \in \overline{\cv}_N$ we have $\langle T , \eta_g \rangle = \| g \|_T$.
\end{prop}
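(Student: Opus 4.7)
The plan is to construct the intersection form first on the interior $\cv_N$ using the simplicial structure of its points, verify the required properties there, and then extend by density and continuity to all of $\overline{\cv}_N$.

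For $T = (\Gamma,\tau,l) \in \cv_N$, I would choose for each $e \in E(\Gamma)$ a lift $\tilde{e}$ to $\widetilde{\Gamma} = T$, and let $\Cyl_T(\tilde{e}) \subset \partial^2 \widetilde{\Gamma} \cong \partial^2 F_N$ denote the Borel set of those $(\zeta_1,\zeta_2)$ whose bi-infinite geodesic representative in $T$ crosses $\tilde{e}$. Because $\mu \in \Curr(F_N)$ is $F_N$-invariant, $\mu(\Cyl_T(\tilde{e}))$ is independent of the chosen lift, and I can set
\[ \langle T, \mu \rangle \;:=\; \sum_{e \in E(\Gamma)} l(e) \cdot \mu(\Cyl_T(\tilde{e})). \]
Linearity in the second argument is immediate from countable additivity of the measure. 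The $\Out(F_N)$-invariance follows because an outer automorphism acts compatibly on the marking of $\Gamma$, on $\widetilde{\Gamma}$, and on cylinder sets in $\partial^2 F_N$, and these actions match up edge-by-edge. For the identity $\langle T, \eta_g \rangle = \|g\|_T$ (with $g$ root-free, the general case extending by scaling), one observes that $\eta_g(\Cyl_T(\tilde{e}))$ equals the number of $\langle g \rangle$-orbits of lifts of $\tilde{e}$ that meet the axis of $g$ in $T$, so summing $l(e)$ with these multiplicities recovers exactly the translation length of $g$ on $T$.

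To extend to $T \in \overline{\cv}_N$, where the quotient may be non-simplicial and the edge-sum formula breaks down, my approach is to use density. Given $T \in \overline{\cv}_N$, pick $T_n \in \cv_N$ with $T_n \to T$ and declare $\langle T, \mu \rangle := \lim_n \langle T_n, \mu \rangle$. When $\mu = \eta_g$ the limit automatically exists and equals $\|g\|_T$, because $T \mapsto \|g\|_T$ is continuous on $\overline{\cv}_N$ by the very definition of its topology. Since currents of the form $\lambda \eta_g$ are dense in $\Curr(F_N)$ (Corollary 3.5 of \cite{MR2197815}), the pairing is already pinned down on a dense set; to extend to all $\mu$ one must establish enough uniformity in $\mu$ along the approximating sequence so that the limit exists and is independent of the choice of $T_n$. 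Joint continuity and the remaining properties then pass to the limit.

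The crux, and the main obstacle, is precisely this uniformity: one must show that small perturbations of $T$ in $\overline{\cv}_N$ produce uniformly small perturbations of $\langle T, \mu \rangle$ as $\mu$ ranges over a normalized family of currents, so that the rational combinations of counting currents suffice to control the whole pairing. Dealing with the degenerate geometries of boundary trees -- in particular very small $\mathbb{R}$-trees with nontrivial (cyclic) arc stabilizers or trees dual to measured laminations -- is where the real technical work of the Kapovich--Lustig construction resides, and is precisely the step I would expect to consume the bulk of the argument.
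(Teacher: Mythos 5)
The paper does not prove this proposition at all: it is imported verbatim as Theorem A of Kapovich--Lustig \cite{MR2496058} and used as a black box, so the relevant comparison is between your sketch and the actual Kapovich--Lustig argument. Your construction on the interior is essentially the standard one and is fine: for $T=(\Gamma,\tau,l)\in\cv_N$ the formula $\langle T,\mu\rangle=\sum_{e\in E(\Gamma)}l(e)\,\mu(\Cyl_T(\tilde e))$ is well defined by $F_N$-invariance, is linear (more precisely $\mathbb{R}_{\geq 0}$-linear) in $\mu$, and the identity $\langle T,\eta_g\rangle=\|g\|_T$ follows by counting crossings of the axis of $g$ exactly as you say.

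The genuine gap is the part you flag and then leave unresolved, and it is not a technicality: it is the entire content of the cited theorem. Declaring $\langle T,\mu\rangle:=\lim_n\langle T_n,\mu\rangle$ for some $T_n\to T$ in $\overline{\cv}_N$ requires (i) existence of the limit for every current $\mu$, (ii) independence of the choice of approximating sequence, and (iii) joint continuity of the resulting pairing, and none of these follows from density of the set $\{\lambda\eta_g\}$ in $\Curr(F_N)$. The topology on $\overline{\cv}_N$ only gives pointwise convergence $\|g\|_{T_n}\to\|g\|_T$, i.e.\ convergence of the functionals $\mu\mapsto\langle T_n,\mu\rangle$ on a dense but non-closed set; to pass to arbitrary $\mu$ one needs equicontinuity-type bounds uniform in $n$, and a pointwise-convergent sequence of linear functionals need not converge off the dense set without such bounds. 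Kapovich--Lustig do not argue by this soft limiting procedure: they work with the bounded back-tracking property of very small $F_N$-actions to get uniform comparisons between cylinder measures and lengths in $T$, which is precisely what tames the degenerate boundary trees (dense orbits, nontrivial arc and point stabilizers) you mention. As written, your proposal proves the statement only on $\cv_N$ and reduces the rest to ``establish enough uniformity,'' which is the theorem itself; it is a correct identification of where the difficulty lies rather than a proof of the proposition.
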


\section{proof of theorem a}

In what follows, whenever we speak of an iwip $\varphi \in \Out(F_N)$ equipped with train track map $f \colon \Gamma \to \Gamma$ we assume that we have passed to a suitable power of $\varphi$ so that for any topological edge $e$ in $\Gamma$, $f(e)$ crosses each topological edge in $\Gamma$.  We will, however, continue to write $\varphi, f$.

\begin{prop}\label{1}
Let $H \leq F_N$ be a finitely generated subgroup of infinite index.  Let $\varphi \in \Out(F_N)$ be fully irreducible with train track map $f \colon \Gamma \to \Gamma$ on the marked graph $(\Gamma,\tau)$.  Let $T = (\Gamma,\tau,l)$ in $\cv_N$.  Let $X^T_H$ be the Stallings subgroup graph corresponding to the conjugacy class of $\tau(H)$ in $\pi_1(\Gamma)$.  Then there exists a power $m$, such that for all $n \geq m$, and for any $e \in E(\Gamma)$, the edgepath $f^n(e)$ does not factor through $X^T_H$.  Furthermore, given a basepoint $b$ in $(X^T_H)_r$, we may choose $m$ so that for all $n \geq m$, $f^n(e)$ does not factor through $(X^T_H)_b$.
\end{prop}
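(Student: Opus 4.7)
The strategy is to argue by contradiction against Proposition~\ref{BFH2.4}: from any hypothetical violation of the claimed bound, I will manufacture a leaf of $\mathcal{L}_{BFH}(\varphi,f,\Gamma)$ carried by $H$, forcing $[F_N:H] < \infty$.

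Suppose no such $m$ works. Then there are $n_k \to \infty$ and $e_k \in E(\Gamma)$ with $f^{n_k}(e_k)$ factoring through $X^T_H$. Since $E(\Gamma)$ is finite, pass to a subsequence so $e_k = e$ is constant; and since we have replaced $\varphi$ by a power for which $f(e)$ crosses every edge, Perron--Frobenius forces $|f^{n_k}(e)| \to \infty$. For each $k$, fix a lift $\tilde\gamma_k$ of $\gamma_k := f^{n_k}(e)$ in $X^T_H$, re-parametrized so that the vertex closest to the midpoint of $\tilde\gamma_k$ is the $0$th vertex; both halves of $\tilde\gamma_k$ then have length tending to infinity. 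Because $V(X^T_H)$ is finite, each slot $i \in \mathbb{Z}$ admits only finitely many candidate edges, and a standard K\"onig/diagonal extraction produces a subsequence along which the $i$th edge is eventually constant for every $i$. The limit is a bi-infinite edge-sequence $L$ in $X^T_H$ whose every finite subpath is contained in $\tilde\gamma_k$ for all large $k$.

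This $L$ is reduced because each $\tilde\gamma_k$ is (as $f$ is a train track and $X^T_H \to \Gamma$ is an immersion). Composing with the immersion yields a bi-infinite reduced path $\bar L \in \partial^2\widetilde{\Gamma}$ each of whose finite subpaths sits inside some $f^n(e)$, so $\bar L$ is a leaf of $\mathcal{L}_{BFH}(\varphi,f,\Gamma)$; since $L \subseteq X^T_H$, every finite subpath of $\bar L$ factors through $X^T_H$, so $H$ carries $\bar L$. Proposition~\ref{BFH2.4} then gives $[F_N:H] < \infty$, contradicting the hypothesis.

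For the furthermore, I rerun the argument inside the still-finite graph $(X^T_H)_b = X^T_H \cup [X^T_H,b]$. The new point is that the extracted bi-infinite reduced edgepath $L$ cannot traverse any edge of the bridge $[X^T_H,b]$, since that bridge is a simple tree path attached to $X^T_H$ at one vertex with $b$ of valence $1$: a single bridge edge in $L$ oriented away from $X^T_H$ would force $L$ to reach $b$ and then backtrack, violating reducedness. Hence $L \subseteq X^T_H$ and the preceding paragraph applies verbatim. I expect the main obstacle to be verifying carefully that the diagonal extraction really produces a leaf in the sense of Definition~\ref{bfhl} (and not just an arbitrary bi-infinite edgepath in $\Gamma$); once that is in hand, Proposition~\ref{BFH2.4} delivers the contradiction automatically.
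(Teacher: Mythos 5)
Your argument is correct, but it is genuinely different from the paper's. You argue by contradiction: from a sequence $f^{n_k}(e)$ factoring through $X^T_H$ you extract, by a K\"onig-type diagonal argument in the finite graph $X^T_H$ (resp.\ $(X^T_H)_b$), a bi-infinite reduced edgepath all of whose finite subpaths lie in some $f^n(e)$; by Definition \ref{bfhl} this is a leaf, it is carried by $H$ by construction, and Proposition \ref{BFH2.4} forces $[F_N:H]<\infty$. The valence-one argument disposing of the bridge edges in the based case is also sound. The paper instead runs the implication forward: non-carrying (via Proposition \ref{BFH2.4}) produces a single finite leaf segment $[z_1,z_2]$ that does not factor through $X^T_H$, and quasiperiodicity of leaves (Proposition \ref{BFH1.8}) guarantees that every sufficiently long leaf segment -- in particular $f^n(e)$ for $n\ge m$ -- contains $[z_1,z_2]$; the based statement is then handled by padding the length threshold by $2l+2\diam(X^T_H)+1$ rather than by a separate structural argument. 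The trade-off: your route avoids quasiperiodicity entirely, needing only the definition of the lamination and Proposition \ref{BFH2.4}, but is purely a compactness argument and produces no explicit $m$; the paper's route uses the extra input of Proposition \ref{BFH1.8} and in exchange gives a concrete length threshold ($|f^m(e)|\ge L''$) that is reused quantitatively in the based version. One small point worth making explicit in your write-up: the limit bi-infinite reduced path in $\Gamma$, lifted to $\widetilde{\Gamma}$, is a geodesic in a tree and hence has two distinct endpoints at infinity, so it genuinely defines an element of $\partial^2\widetilde{\Gamma}$ as Definition \ref{bfhl} requires; with that observed, the "main obstacle" you flag is fully resolved.
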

\begin{proof}
By Proposition \ref{BFH2.4}, $H$ can not carry a leaf of $\mathcal{L}_{BFH}(\varphi,f,\Gamma)$.  Thus there is a finite subpath $[z_1,z_2]$ (of a leaf $(\zeta_1,\zeta_2) \subset \partial^2 \widetilde{\Gamma}$) which does not factor through $X^T_H$.  Let $L = | [z_1,z_2] |$.  Since leaves are quasiperiodic, there exists $L'$ so that for any subpath $[w_1,w_2]$ of length at least $L'$, we have $[w_1,w_2] \Supset [z_1,z_2]$.  Choose a basepoint $v \in \Gamma$ and let $b$ be the basepoint in $(X^T_H)_r$ so that the image of $\pi_1((X^T_H)_b)$ in $\pi_1(\Gamma,v)$ is equal to $\tau(H)$.  Let $l$ be the length of the bridge $[X^T_H,b]$.  Set $L'' = L' + 2l + 2 \diam(X^T_H) + 1$.  Now let $m \geq 1$ be so that the length of $f^m(e)$ is at least $L''$ for any $e \in E(\Gamma)$.  Then for any $n \geq m$, $f^n(e) \Supset [z_1,z_2]$ for any $e \in E(\Gamma)$.  Thus for any $n \geq m$, and for any $e \in E(\Gamma)$, we have that $f^n(e)$ does not factor through $X^T_H$, nor can it factor through $(X^T_H)_b$ since at least $2l + 1$ edges in $f^n(e)$ must lie outside of $X^T_H$.
\end{proof}

\begin{cor}\label{2}
Let $\mathcal{H} = \cup H_i$ be a finite union of subgroups $H_i < F_N$ of infinite index.  Then there exists a power $m$ such that for all $n \geq m$ and for any $e \in E(\Gamma)$ and for all $i$, the edgepath $f^n(e)$ does not factor through $(X^T_{H_i})_b$.  Furthermore, there exists a primitive element $z \in \pi_1(\Gamma)$ so that $z$ contains $f^m(e)$ and hence $z$ does not factor through $(X^T_{H_i})_b$ for any $i$.
\end{cor}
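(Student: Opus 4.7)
The plan is as follows. First I would apply Proposition \ref{1} separately to each subgroup $H_i$, choosing a basepoint $b_i$ in $(X^T_{H_i})_r$. For each $i$ this yields a power $m_i$ such that for every $n \geq m_i$ and every $e \in E(\Gamma)$, the edgepath $f^n(e)$ does not factor through $(X^T_{H_i})_{b_i}$. Setting $m = \max_i m_i$ immediately establishes the first assertion: for all $n \geq m$, all $e \in E(\Gamma)$, and all $i$, $f^n(e)$ does not factor through $(X^T_{H_i})_{b_i}$.

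For the second assertion I need to produce a primitive (root-free) conjugacy class $z \in \pi_1(\Gamma) \cong F_N$ represented by a cyclically reduced loop in $\Gamma$ containing $f^m(e)$ as a subpath. I would argue as follows. Let $w = f^m(e)$, a reduced edgepath from a vertex $v_0$ to a vertex $v_1$. Since $\Gamma$ is connected of rank $N \geq 2$, I can choose a reduced edgepath $q$ from $v_1$ to $v_0$ for which $wq$ is cyclically reduced. If the conjugacy class $[wq]$ is root-free, set $z = [wq]$. Otherwise, suppose cyclically $wq = y^k$ for some primitive $y$ and $k \geq 2$; I would then lengthen $q$ by appending a loop $\sigma$ at $v_0$ whose conjugacy class is not a power of $[y]$, chosen so that the cyclic reduction of the enlarged loop still contains $w$ as a literal subpath. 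Since primitive conjugacy classes are abundant in $F_N$ for $N \geq 2$, such a $\sigma$ always exists.

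Once such a $z$ is constructed, the remaining claim is immediate: if $z$ factored through $(X^T_{H_i})_{b_i}$ for some $i$, then restricting the resulting lift to the subpath $w \Subset z$ would exhibit a factoring of $f^m(e)$ through $(X^T_{H_i})_{b_i}$, contradicting the first assertion.

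The main obstacle will be the construction of the primitive loop itself: one must guarantee simultaneously that $f^m(e)$ survives cyclic reduction as a literal subpath and that the resulting conjugacy class is root-free. The freedom in choosing the closing path $q$, together with the option of appending an auxiliary loop $\sigma$ to break any periodicity, should give enough flexibility, though in the rank-2 case where $f(e)$ already traverses every edge this requires some care.
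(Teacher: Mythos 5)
Your proof of the first assertion (apply Proposition \ref{1} to each $H_i$ and take $m=\max_i m_i$) is exactly the paper's argument, and your final observation that a factoring of $z$ would restrict to a factoring of $f^m(e)$ is also how the paper concludes. The gap is in the construction of $z$. In this paper ``primitive'' means \emph{member of a free basis} -- this is precisely what is needed downstream, since Proposition \ref{3} and Theorem \ref{vW} require a free basis $\mathcal{B}$ with $z\in\mathcal{B}$ (compare Definition \ref{W} and Corollary \ref{qi}) -- whereas your construction aims only at root-freeness, which is a strictly weaker property ($a^2b^2$ is root-free but not primitive). Even granting the root-freeness repair, you give no reason why a loop of the form $wq$ or $wq\sigma$ containing the prescribed subpath $w=f^m(e)$ should be a basis element: ``primitive conjugacy classes are abundant'' does not let you prescribe a long subword, and in general forcing a specific word to occur inside a cyclically reduced primitive element is delicate (the Whitehead graph of a primitive element must have a cut vertex or isolated vertex, so many words can occur in no primitive element at all). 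So the second assertion is not established by your argument.

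The paper sidesteps this entirely by transporting primitivity rather than building it by hand: start with \emph{any} primitive loop $a$, and set $z=f^n(a)$ for large $n$. Since $f$ is a homotopy equivalence representing $\varphi\in\Out(F_N)$, the class of $f^n(a)$ is $\varphi^n(a)$ up to conjugacy, hence automatically primitive. The only work is to show $f^m(e)\Subset f^n(a)$ despite possible cancellation: because $\varphi$ is fully irreducible, $a$ is not a periodic conjugacy class, so $l_\Gamma([[f^k(a)]])\to\infty$ and one can find an edge $e'$ in some $f^k(a)$ whose images never cancel in further iterates; since (after the standing passage to a power) $f(e')$ crosses every edge, $f^{m}(e)$ occurs in $f^{n}(a)$ for all $n\geq k+m+1$. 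If you want to salvage your approach you would need a genuine primitivity criterion for your hand-built loop; the push-forward argument is both shorter and avoids the issue.
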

\begin{proof}
Fix a fully irreducible $\varphi \in \Out(F_N)$ equipped with train track map $f \colon \Gamma \to \Gamma$.  Fix an edge $e \in E(\Gamma)$.  Now apply Proposition \ref{1} to each $H_i$ and obtain a collection $\{ m_i \}_{i=1}^k$ of exponents for which the following holds for each $i$: for all $n \geq m_i$ the edgepath $f^n(e)$ does not factor through $(X^T_{H_i})_b$.  Set $m = \max_i \{ m_i \}$.  Then for all $n \geq m$ and for all $i$ we have that $f^n(e)$ does not factor through $(X^T_{H_i})_b$.

Now let $a$ be any primitive loop in $\pi_1(\Gamma)$.  Since $\varphi$ is fully irreducible, $z$ cannot be a periodic conjugacy class, and hence $l_\Gamma([[f^k(a)]]) \to \infty$.  Thus we can find an edge $e'$ in suitable iterate, $f^k(a)$, of $a$ so that in all further iterates of $a$ by $f$ there is no cancellation in the images of $e'$.  Thus for all $n \geq k+m+1$, we have that $f^m(e)$ occurs in $f^n(a)$ and hence $f^n(a)$ cannot factor through $(X^T_{H_i})_b$ for any $i$.  Set $z = f^n(a)$ for some $n \geq k+m+1$.
\end{proof}

\begin{prop}\label{3}
Let $\mathcal{H} = \cup H_i$ be a finite union of subgroups $H_i < F_N$ of infinite index.  Then there exists a primitive element $z$ and a free basis $\mathcal{B}$ with $z \in \mathcal{B}$ and a number $M$ so that for all $i$ and for all $h \in H_i$, if $z^k \Subset [h]_\mathcal{B}$, then $|k| \leq M$.
\end{prop}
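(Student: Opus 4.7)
My plan proceeds in three stages, leveraging the machinery already assembled in Corollaries~\ref{2} and~\ref{qi}.

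First, I apply Corollary~\ref{2} to $\mathcal{H} = \cup H_i$: fix an iwip $\varphi \in \Out(F_N)$ with train track representative $f \colon \Gamma \to \Gamma$ on a marked graph $(\Gamma, \tau)$, fix $T = (\Gamma, \tau, l) \in \cv_N$ for some choice of lengths, and for each $i$ choose a basepoint $b$ in $(X^T_{H_i})_r$ lying over $\tau(*) \in \Gamma$ such that the image of $\pi_1((X^T_{H_i})_r, b)$ in $\pi_1(\Gamma, \tau(*))$ equals $\tau(H_i)$. Corollary~\ref{2} then supplies a primitive loop $z$ in $\Gamma$ that contains $f^m(e)$ for suitable $m, e$ and, crucially, does not factor through $(X^T_{H_i})_b$ for any $i$. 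Because $z$ represents a primitive element of $F_N$, it extends to a free basis $\mathcal{B}$ of $F_N$.

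The key step is the following lemma: for every $i$ and every $h \in H_i$, the reduced edgepath representing $h$ in $\Gamma$ contains neither $z$ nor $z^{-1}$ as a subpath. To verify this, view $h$ as a reduced loop in $\Gamma$ based at $\tau(*)$; by the choice of $b$, this loop lifts to a reduced loop in $(X^T_{H_i})_r$ based at $b$. The regularization $(X^T_{H_i})_r$ is obtained from $(X^T_{H_i})_b$ by attaching hanging trees at vertices where the valence fails to match $\val_{E(\Gamma)}$, and a reduced path in a graph cannot enter such a tree and return without backtracking inside the tree. Consequently the lifted loop remains entirely within $(X^T_{H_i})_b$, so $h$ factors through $(X^T_{H_i})_b$. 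If $z$ or $z^{-1}$ occurred as a subpath of $h$ in $\Gamma$, the corresponding sublift of $h$ would factor $z$ through $(X^T_{H_i})_b$, contradicting our choice of $z$.

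With the lemma established, the proposition reduces to Corollary~\ref{qi}. Taking $A$ to be the collection of reduced loops in $\Gamma$ representing the various $h \in \mathcal{H}$, the lemma tells us the hypothesis of Corollary~\ref{qi} is satisfied with constant $0$; the corollary then yields a constant $M$ such that $z^n \Subset [h]_\mathcal{B}$ forces $|n| \leq M$ for every $h \in \mathcal{H}$, which is the desired conclusion.

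I expect the main subtlety to lie in the middle stage---choosing $b$ coherently for each $H_i$, allowing for the case $b \notin X^T_{H_i}$ (where the bridge $[X^T_{H_i}, b]$ enters the picture), and rigorously justifying that reduced loops cannot excurse into the hanging trees of $(X^T_{H_i})_r$. Once this is in hand, the transfer from edgepaths in $\Gamma$ to freely reduced words over $\mathcal{B}$ is essentially automatic via the quasi-isometric comparison encapsulated in Corollary~\ref{qi}.
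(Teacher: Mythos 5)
Your proposal is correct and follows essentially the same route as the paper: Corollary \ref{2} supplies the primitive $z$ that cannot be read in any $(X^T_{H_i})_b$, and the passage to the basis $\mathcal{B}$ is exactly the quasi-geodesic stability step (the paper invokes Propositions \ref{BH_1.7} and \ref{diam}, i.e.\ Corollary \ref{qi}). Your intermediate lemma --- that each $h \in H_i$ factors through the based core $(X^T_{H_i})_b$, so $z^{\pm 1}$ never occurs in the reduced loop for $h$ and Corollary \ref{qi} applies with $M = 0$ --- simply makes explicit the bounded-overlap claim that the paper asserts without detail.
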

\begin{proof}
Let $z$ be the primitive element afforded by Corollary \ref{2}.  Let $Z = \{ z^k \mid k \in \mathbb{Z} \}$.  Let $B = \{ \tau(h) \mid h \in H \}$.  The previous proposition gives the following: there exists a constant $C$ so that for any $\zeta \in Z$ and for any $\beta \in B$ and for any choice of lifts $\widetilde{\zeta}, \widetilde{\beta}$ in $\widetilde{\Gamma}$, the intersection $\diam(\widetilde{\zeta} \cap \widetilde{\beta}) < C$.  Let $f \colon \widetilde{\Gamma} \to \Cay(F_N,\mathcal{B})$ be a quasi-isometry to the Cayley graph of $F_N$ with respect to a basis $\mathcal{B}$ with $z \in \mathcal{B}$.  Now apply Proposition \ref{BH_1.7} using $Z,B$ and $f$.  We conclude there exists a bound $M$ so that for any $h \in \mathcal{H}$, if $z^k \Subset [h]_\mathcal{B}$, then $|k| \leq M$.
\end{proof}

\begin{thm}\label{vW}
Let $\mathcal{H}= \cup g_iH_i$ be a finite collection of cosets of finitely generated subgroups of infinite index, $H_i \leq F_N$.  Then the set $\mathcal{H}$ has property $\mathcal{W}$.
\end{thm}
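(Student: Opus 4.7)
The strategy is to apply Proposition \ref{3} to $\{H_i\}_{i=1}^k$ and bootstrap from the subgroup statement to the coset statement by a careful analysis of free and cyclic reduction. Proposition \ref{3} supplies a primitive element $z \in F_N$, a free basis $\mathcal{B}$ containing $z$, and a constant $M \geq 1$ such that $z^k \Subset [h]_\mathcal{B}$ forces $|k| \leq M$ whenever $h \in \bigcup_i H_i$. Setting $L := \max_i |g_i|_\mathcal{B}$, I claim that Property $\mathcal{W}$ holds for $\mathcal{H}$ with basis $\mathcal{A} := \mathcal{B}$, letter $a := z$, and constant $M' := 2(M + L)$.

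The argument is a two-layer bound on $z$-runs. First, for any $\sigma = g_i h \in g_i H_i$, the reduced form $[\sigma]_\mathcal{B}$ is the free reduction of the product $g_i \cdot [h]_\mathcal{B}$, so any maximal $z$-run in $[\sigma]_\mathcal{B}$ either lies inside the surviving portion of $g_i$ (length $\leq L$), inside the surviving portion of $[h]_\mathcal{B}$ (length $\leq M$ by Proposition \ref{3}), or straddles the junction -- in which case its length is at most the sum of a $z$-portion of $g_i$ and a $z$-portion of $[h]_\mathcal{B}$, hence $\leq L + M$. Thus $z^k \Subset [\sigma]_\mathcal{B}$ implies $|k| \leq L + M$.

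Next, write $L_\sigma := [[\sigma]]_\mathcal{B}$. Since $L_\sigma$ is cyclically reduced, $L_\sigma^2$ is reduced, and $[\sigma^2]_\mathcal{B} = p^{-1} L_\sigma^2 p$ for the cyclic-reduction prefix $p$ of $\sigma$; in particular $L_\sigma^2$ appears as a substring of $[\sigma^2]_\mathcal{B}$, and consequently every cyclic $z^k$-subword of $L_\sigma$ (of length $k \leq |L_\sigma|$) appears as a linear $z$-subword of $L_\sigma^2$. Linear $z$-runs in $L_\sigma^2 = L_\sigma \cdot L_\sigma$ either lie inside a single copy of $L_\sigma$ (bounded by the maximum $z$-run in $L_\sigma$, which is $\leq L + M$ by the first layer since $L_\sigma$ is a substring of $[\sigma]_\mathcal{B}$), or they straddle the central junction of $L_\sigma^2$, in which case their length equals the sum of the trailing and leading $z$-exponents of $L_\sigma$ (each bounded by $L + M$). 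This yields $|k| \leq 2(L + M) = M'$, establishing Property $\mathcal{W}$.

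The main conceptual step is the passage to $\sigma^2$: by considering $L_\sigma^2$ in place of the cyclic word $L_\sigma$, cyclic $z$-subwords are converted into linear ones, sidestepping the need to examine arbitrary conjugates of $\sigma$ (which typically do not themselves lie in cosets of the $H_i$). The remaining bookkeeping is a routine analysis of $z$-runs before and after free reduction at each relevant junction; the constant $2(M + L)$ could likely be sharpened to something closer to $M + L$ or $2M + L$ with more care, but is adequate for Property $\mathcal{W}$.
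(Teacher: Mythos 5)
Your proposal is correct and follows the same route as the paper: invoke Proposition \ref{3} to bound $z$-runs coming from the $H_i$, absorb the contribution of the finitely many $g_i$ into the constant, and conclude property $\mathcal{W}$ for $(z,\mathcal{B},M')$. The only difference is that you spell out the bookkeeping (cancellation at the $g_i\cdot h$ junction and the passage from the reduced to the cyclically reduced word via $L_\sigma^2$) that the paper compresses into a single ``Thus''; this is a welcome elaboration, not a different argument.
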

\begin{proof}
Let $z, \mathcal{B}$ be the primitive element and free basis afforded by Proposition \ref{3}.  Thus there exists an $M'$ such that for all $i$ and all $h \in H_i$ if $z^k \Subset [h_i]_\mathcal{B}$, then $|k| \leq M'$.  Since $\{ g_i \}_{i=1}^n$ is finite, there is $M_g$ so that for all $i$, if $z^k \Subset [g_i]_\mathcal{B}$, then $|k| \leq M_g$.  Thus there is an $M$ so that for any $i$ and any $h_i \in H_i$, if $z^k \Subset [[g_i h_i]]_\mathcal{B}$, then $|k| \leq M$.  Thus $H$ has property $\mathcal{W}$ with respect to $(z,\mathcal{B},M)$.
\end{proof}

Since $\| g^n \| = n \| g \|$ it is clear that any (coset of a) subgroup of finite index is spectrally rigid.  We thus have the following.

\begin{thmaa}
Let $N \geq 2$.  Let $\{ H_i \}_{i=1}^k$ be a finite collection of finitely generated subgroups $H_i < F_N$.  Let $\{ g_i \}_{i=1}^k$ be a finite collection of elements $g_i \in F_N$. Let $\mathcal{H} = \cup_{i=1}^k g_iH_i$.  Then the following are equivalent.
\begin{enumerate}
\item For every $i$, $[F_N \colon H_i] = \infty$.
\item $\mathcal{H} \subset F_N$ satisfies property $\mathcal{W}$.
\item $\mathcal{H} \subset F_N$ is not spectrally rigid in $F_N$.
\end{enumerate}
\end{thmaa}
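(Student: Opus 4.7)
The plan is to close the cycle of implications $(1) \Rightarrow (2) \Rightarrow (3) \Rightarrow (1)$. The first two are already in hand: $(1) \Rightarrow (2)$ is exactly Theorem \ref{vW}, in which the primitive element $z$ and free basis $\mathcal{B}$ extracted via the Bestvina-Feighn-Handel lamination witness property $\mathcal{W}$ for $\mathcal{H} = \cup g_i H_i$; and $(2) \Rightarrow (3)$ is Theorem \ref{CLSRSV}, the generalized Smillie-Vogtmann / Cohen-Lustig-Steiner argument. So only $(3) \Rightarrow (1)$ remains.

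I would prove the contrapositive: if at least one $H_{i_0}$ has finite index in $F_N$, then $\mathcal{H}$ is spectrally rigid. Since $\mathcal{H} \supseteq g_{i_0} H_{i_0}$ and any superset of a spectrally rigid set is spectrally rigid, it suffices to show that a single coset $gH$ of a finite-index subgroup $H \leq F_N$ is spectrally rigid. Fix $w \in F_N$. Because $F_N/H$ (as a set of left cosets) is finite, the pigeonhole principle applied to $w, w^2, w^3, \dots$ produces an integer $n \geq 1$ with $w^n \in H$, so that $g w^{nm} \in gH$ for every $m \geq 1$. Hence if $T, T' \in \cv_N$ agree on $gH$, we have $\|gw^{nm}\|_T = \|gw^{nm}\|_{T'}$ for all $m \geq 1$. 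Invoking the standard asymptotic $\|gw^{nm}\|_T = nm\,\|w\|_T + O(1)$, with the $O(1)$ depending on $g, w, T$ but not on $m$, we divide by $m$ and let $m \to \infty$ to conclude $\|w\|_T = \|w\|_{T'}$. Since $w \in F_N$ was arbitrary and $\ell \colon \cv_N \to (\mathbb{R}_{\geq 0})^{F_N}$ is injective, $T = T'$.

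The only point requiring genuine verification is the asymptotic $\|gw^{nm}\|_T = nm\,\|w\|_T + O(1)$, and even this is routine: writing $w = pup^{-1}$ with $u$ cyclically reduced in the quotient graph $F_N \backslash T$, the cyclic reduction of $g w^{nm} = (gp)\, u^{nm}\, p^{-1}$ has length $nm\,|u| + O(1) = nm\,\|w\|_T + O(1)$, with the error uniform in $m$ (and if $w = 1$ the claim is trivial). The main ``obstacle'' is thus really just packaging; Theorems \ref{vW} and \ref{CLSRSV} do the conceptually difficult work, and everything else in the proof is a short convergence argument via pigeonhole and the linear growth of translation length on axes.
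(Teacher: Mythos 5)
Your proof is correct and follows essentially the same route as the paper: (1)$\Rightarrow$(2) via Theorem \ref{vW}, (2)$\Rightarrow$(3) via Theorem \ref{CLSRSV}, and the contrapositive of (3)$\Rightarrow$(1) by showing a coset of a finite-index subgroup is spectrally rigid. The only difference is that the paper dismisses this last step with the one-line remark that $\|g^n\| = n\|g\|$, whereas you supply the (correct, and for cosets genuinely needed) pigeonhole-plus-asymptotic argument $\|gw^{nm}\|_T = nm\|w\|_T + O(1)$.
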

\begin{proof}
Theorem \ref{vW} gives (1) implies (2).  Theorem \ref{CLSRSV} gives (2) implies (3).  Finally, if at least one subgroup is of finite index (i.e. not (1)), then $\mathcal{H}$ is spectrally rigid (i.e. not (3)).
\end{proof}

\section{proof of theorem b}

Recall that the space, $\Curr(F_N)$, of geodesic currents consists of positive radon measures on $\partial^2 F_N$ which are $F_N$ and flip invariant.  Also recall that $\Curr(F_N)$ and $\overline{\cv}_N$ are related by an intersection form $\langle \cdot , \cdot \rangle$ for which $\langle \eta_g , T \rangle = \| g \|_T$ (see \S \ref{GC} and Proposition \ref{KLA}).

\begin{prop}[{Kapovich and Lustig \cite[Lemma 2.10]{MR2322181}}]\label{KL2.10}
Let $\mathcal{A}$ be a free basis of $F_N$.  Then for any cyclically reduced words $w_n, w \in F_N$, we have
\[
\lim_{n \to \infty} [\eta_{w_n}] = [\eta_w] \qquad \text{if and only if} \qquad \lim_{n \to \infty} \frac{\eta_{w_n}}{\| w_n \|_\mathcal{A}} = \frac{\eta_w}{\| w \|_\mathcal{A}}
\]
\end{prop}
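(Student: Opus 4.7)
The backward implication is immediate: dividing by the positive scalar $\|w\|_{\mathcal{A}}$ on the right and by $\|w_n\|_{\mathcal{A}}$ on the left shows that the stated measure-convergence is, up to the scalar $\|w\|_{\mathcal{A}}/\|w_n\|_{\mathcal{A}}$, a convergence of representatives of $[\eta_{w_n}]$ to a representative of $[\eta_w]$. So the whole content is in the forward direction, which amounts to identifying the correct normalizing sequence of scalars.

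The plan is to extract that scalar by evaluating on the single-letter cylinders. First I would record the key bookkeeping identity
\[
\sum_{a \in \mathcal{A}} \eta_g(\Cyl(a)) \;=\; \|g\|_\mathcal{A}
\]
for any nontrivial cyclically reduced $g \in F_N$. This is immediate from the definition $\eta_g(\Cyl(v)) = n_g(v^\pm)$ given in \S\ref{GC}: every position in the cyclic word $g$ contributes to exactly one term on the left, namely the one where $a \in \mathcal{A}$ is the (unoriented) letter appearing at that position.

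Now suppose $[\eta_{w_n}] \to [\eta_w]$. By definition of convergence of projective classes in $\Curr(F_N)$, there exist scalars $\lambda_n > 0$ with $\lambda_n \eta_{w_n} \to \eta_w$ as Radon measures on $\partial^2 F_N$. Each cylinder $\Cyl(a)$ is a clopen subset of $\partial^2 F_N$ (and is compact), so it is a continuity set for weak-$*$ convergence; hence
\[
\lambda_n \, \eta_{w_n}(\Cyl(a)) \;\longrightarrow\; \eta_w(\Cyl(a)) \qquad \text{for every } a \in \mathcal{A}.
\]
Summing this over the finite basis $\mathcal{A}$ and applying the identity above on both sides yields $\lambda_n \|w_n\|_\mathcal{A} \to \|w\|_\mathcal{A}$. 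Since $\|w\|_\mathcal{A} > 0$, we may write
\[
\frac{\eta_{w_n}}{\|w_n\|_\mathcal{A}} \;=\; \frac{\lambda_n \,\eta_{w_n}}{\lambda_n \|w_n\|_\mathcal{A}} \;\longrightarrow\; \frac{\eta_w}{\|w\|_\mathcal{A}},
\]
which is the required convergence.

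The only mildly delicate step is the passage from weak-$*$ convergence of $\lambda_n \eta_{w_n}$ to the pointwise convergence $\lambda_n \eta_{w_n}(\Cyl(a)) \to \eta_w(\Cyl(a))$; I expect this to be the main obstacle if one is being careful, and it is handled by the observation that cylinder sets are clopen (equivalently, their indicator functions are continuous and compactly supported), so no Portmanteau-style boundary hypothesis is needed. Everything else is bookkeeping.
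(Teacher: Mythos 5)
Your argument is correct, but note that the paper itself offers no proof of this statement: it is imported verbatim from Kapovich--Lustig \cite[Lemma 2.10]{MR2322181}, so there is no internal proof to compare against. What you wrote is essentially the standard argument for that lemma: the only real content is the forward direction, and you handle it by applying the ``weight'' functional $\mu \mapsto \sum_{a \in \mathcal{A}} \mu(\Cyl(a))$, which is weak-$*$ continuous because each one-letter cylinder is compact and open (so its indicator lies in $C_c(\partial^2 F_N)$), together with the bookkeeping identity $\sum_{a \in \mathcal{A}} \eta_g(\Cyl(a)) = \|g\|_\mathcal{A}$ for cyclically reduced $g$; this pins down the normalizing scalars, and the rest is rescaling. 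Two small points to make explicit: the words $w_n$, $w$ must be nontrivial (otherwise the counting currents vanish and the projective classes are undefined; this also guarantees $\|w\|_\mathcal{A} > 0$ and $\lambda_n \|w_n\|_\mathcal{A} \to \|w\|_\mathcal{A} > 0$, so the final division is legitimate), and your use of ``$[\eta_{w_n}] \to [\eta_w]$ means there exist $\lambda_n > 0$ with $\lambda_n \eta_{w_n} \to \eta_w$'' should be flagged as the definition of projective convergence being used --- it is exactly the characterization the paper invokes later in the proof of Theorem B, so there is no mismatch, but if one instead defined the projective topology as a quotient topology this lifting of convergence to representatives is itself a (short) argument via the same weight normalization. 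With those remarks, the proof is complete.
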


\begin{prop}
For any nontrivial $r \in F_N$, the set $\ncl(r)$ is strongly spectrally rigid.
\end{prop}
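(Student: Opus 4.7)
The plan is to show that for any nontrivial $g \in F_N$, the counting current $\eta_g$ is a projective limit of currents $\eta_{w_n}$ with $w_n \in \ncl(r)$; the intersection form then forces $\|g\|_T$ and $\|g\|_{T'}$ to coincide whenever $T, T' \in \overline{\cv}_N$ agree on $\ncl(r)$, which by injectivity of $\overline{\ell}$ yields $T = T'$. So let $T, T'$ agree on $\ncl(r)$ and fix $g \in F_N \setminus \{1\}$; the goal is $\|g\|_T = \|g\|_{T'}$.

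To produce the approximating sequence, first I would exhibit $s \in \ncl(r)$ with $[g,s] \neq 1$. Here I use two classical facts about free groups: (i) the centralizer $C_{F_N}(g)$ is cyclic, and (ii) for $N \geq 2$ no nontrivial cyclic subgroup of $F_N$ is normal, so a nontrivial normal subgroup of $F_N$ cannot be contained in any cyclic subgroup. Together these rule out $\ncl(r) \subseteq C_{F_N}(g)$, producing the desired $s$.  Set $w_n := [g^n, s] = g^n s g^{-n} s^{-1}$.  Since $s \in \ncl(r)$ we have $w_n \in \ncl(r)$, and $[g,s] \neq 1$ forces $w_n \neq 1$.

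The next step is to verify $\eta_{w_n}/\|w_n\|_\mathcal{A} \to \eta_g/\|g\|_\mathcal{A}$ in $\Curr(F_N)$ for some (equivalently, any) free basis $\mathcal{A}$. Choose $\mathcal{A}$ so that $g$ and $s$ are cyclically reduced, represented by words $u, v$. For large $n$, cyclically reducing $g^n s g^{-n} s^{-1}$ yields a word of the form $u^{n-O(1)}\tilde{v}\, u^{-(n-O(1))}\tilde{v}'$ where $\tilde{v}, \tilde{v}'$ are bounded edits of $v$ and $v^{-1}$; thus $\|w_n\|_\mathcal{A} = 2n\|g\|_\mathcal{A} + O(1)$. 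For each fixed freely reduced word $v_0$, the subword count $n_{w_n}(v_0^\pm)$ receives a contribution $n \cdot n_g(v_0^\pm)$ from the $u^n$ block, an equal contribution from the $u^{-n}$ block (using $n_u(v_0^\pm) = n_{u^{-1}}(v_0^\pm)$), and an $O(1)$ contribution from the two junctions. Dividing by $\|w_n\|_\mathcal{A}$ and letting $n \to \infty$ gives $n_{w_n}(v_0^\pm)/\|w_n\|_\mathcal{A} \to n_g(v_0^\pm)/\|g\|_\mathcal{A}$ on every cylinder set, which by Proposition \ref{KL2.10} upgrades to convergence in $\Curr(F_N)$.

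To finish, the hypothesis yields $\langle T, \eta_{w_n}\rangle = \|w_n\|_T = \|w_n\|_{T'} = \langle T', \eta_{w_n}\rangle$ for every $n$. Divide by $\|w_n\|_\mathcal{A}$ and invoke continuity and linearity in the second argument of the intersection form (Proposition \ref{KLA}) to pass to the limit, obtaining $\langle T, \eta_g/\|g\|_\mathcal{A}\rangle = \langle T', \eta_g/\|g\|_\mathcal{A}\rangle$, equivalently $\|g\|_T = \|g\|_{T'}$. As $g$ was arbitrary, $T = T'$ in $\overline{\cv}_N$. The main obstacle is the cylinder-count calculation: one must carefully track the $O(1)$ cancellations that occur when cyclically reducing $g^n s g^{-n} s^{-1}$ and verify that they are swamped by the $2n\|g\|_\mathcal{A}$ denominator in the $n \to \infty$ limit. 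The existence of $s$ is the one place where the nonabelian structure of $F_N$ is essential.
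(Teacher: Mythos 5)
Your overall strategy is the same as the paper's: produce cyclically reduced words $w_n \in \ncl(r)$ whose normalized counting currents converge projectively to $[\eta_g]$, then finish with Proposition \ref{KL2.10} and the continuity/linearity of the intersection form (Proposition \ref{KLA}). Where you differ is in the construction of the approximating sequence. The paper takes $w_i$ to be a product of \emph{two} conjugates of $r$, $w_i=\alpha u^i\beta r\beta^{-1}u^{-i}\alpha^{-1}\gamma u^i\delta r\delta^{-1}u^{-i}\gamma^{-1}$, exploiting the freedom in $\alpha,\beta,\gamma,\delta$ to make $w_i$ freely and cyclically reduced \emph{as written}; this makes the cylinder-count computation immediate and requires no auxiliary facts. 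Your choice $w_n=[g^n,s]$ with a fixed $s\in\ncl(r)$, $[g,s]\neq 1$, buys a cleaner-looking word but shifts the work elsewhere: you need (i) the existence of such an $s$ (your centralizer-plus-normality argument is correct, for $N\geq 2$), (ii) the fact that $[g^n,s]\neq 1$ for all $n$ (true: in a free group $[g^n,s]=1$ forces $[g,s]=1$), and above all (iii) the claim that the cyclically reduced form of $[g^n,s]$ is $u^{n-O(1)}\tilde v\,u^{-(n-O(1))}\tilde v'$ with $\tilde v,\tilde v'$ of bounded length. Point (iii) is exactly the place you flag, and it is true but not automatic: the danger is a cascade in which the middle blocks cancel completely and the $u^{n}$ and $u^{-n}$ blocks annihilate each other under free or cyclic reduction. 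It is ruled out precisely by $s\notin C_{F_N}(g)$; for instance, in the Cayley tree the axes of $g$ and $sgs^{-1}$ share no endpoint at infinity (their end-stabilizers are $C_{F_N}(g)$ and $sC_{F_N}(g)s^{-1}$), so they overlap in a segment of length bounded independently of $n$, giving $\|[g^n,s]\|_{\mathcal A}=2n\|g\|_{\mathcal A}+O(1)$ and the stated block structure. You should either supply this argument or adopt the paper's trick of choosing the conjugating words to kill all cancellation by inspection. One small slip: you cannot in general choose the basis $\mathcal A$ to make $g$ and $s$ simultaneously cyclically reduced; instead replace $g$ by a cyclically reduced conjugate (harmless for $\|\cdot\|_T$ and $\eta_g$) and leave $s$ as it is, since only the $g$-blocks need to be periodic in $u$.
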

\begin{proof}
Let $u \in F_N$ be an arbitrary cyclically reduced word.  We will show that $\| u \|_T = \| u \|_{T'}$ from which it will follow that $T=T'$ (since $\| \cdot \|$ is constant on conjugacy classes).  To that end, we first construct a sequence of cyclically reduced words $\{w_i\} \subset \ncl(r)$ with the property that
\[
[\eta_{w_i}] \to [\eta_{u}]
\]
To that end, let
\[
w_i = \alpha u^i \beta r \beta^{-1} u^{-i} \alpha^{-1} \gamma u^i \delta r \delta^{-1} u^{-i} \gamma^{-1}
\]
Evidently $w_i$ is a product of conjugates of $r$, so is in $\ncl(r)$.  It is clear that we may choose $\alpha, \beta, \delta,\gamma$ so that $w_i$ is freely (and so cyclically, by inspection) reduced.  Now let $v \in F_N$ be arbitrary.  By Proposition \ref{KL2.10} it is enough to show that
\[
\lim_{i \to \infty} \frac{n_{w_i}(v^\pm)}{\| w_i \|_\mathcal{A}} = \frac{n_u(v^\pm)}{\| u \|_\mathcal{A}}
\]
Note that there is a uniform bound on the number of occurrences of $v$ or $v^{-1}$ in $w_i$ which do not occur entirely within the $u^i$ or $u^{-i}$ subwords of $w_i$.  Furthermore the difference between $\| w_i \|_\mathcal{A}$ and $4 \| u^i \|_\mathcal{A}$ is uniformly bounded.  We now have that
\begin{align*}
\lim_{i \to \infty} \frac{n_{w_i}(v^\pm)}{\| w_i \|_\mathcal{A}} &= \lim_{i \to \infty} \frac{2n_{u^i}(v^\pm) + 2n_{u^{-i}}(v^\pm) + D_i}{4i \| u \|_\mathcal{A} + C_i} \\
&= \lim_{i \to \infty} \frac{2in_{u}(v^\pm) + 2in_{u^{-1}}(v^\pm) + D_i}{4i \| u \|_\mathcal{A} + C_i} \\
&= \lim_{i \to \infty} \frac{2in_{u}(v^\pm) + 2in_{u}(v^\pm) + D_i}{4i \| u \|_\mathcal{A} + C_i} \\
&= \lim_{i \to \infty} \frac{4in_{u}(v^\pm) + D_i}{4i \| u \|_\mathcal{A} + C_i}= \frac{n_u(v^\pm)}{\| u \|_\mathcal{A}}
\end{align*}
as desired.

Now suppose that $T,T' \in \overline{\cv}_N$ agree on $\ncl(r)$.  Since $[\eta_{w_i}] \to [\eta_u]$ there exists a sequence $\lambda_i$ such that
\[
\lim_{i \to \infty} \lambda_i \eta_{w_i} = \eta_u
\]
Then we have that
\begin{align*}
\| u \|_T &= \langle T , \eta_u \rangle = \langle T , \lim_{i \to \infty} \lambda_i \eta_{w_i} \rangle = \lim_{i \to \infty} \lambda_i  \langle T , \eta_{w_i} \rangle \\
&= \lim_{i \to \infty} \lambda_i \| w_i \|_T = \lim_{i \to \infty} \lambda_i \| w_i \|_{T'} = \lim_{i \to \infty} \lambda_i \langle T' , \eta_{w_i} \rangle \\
&= \langle T' , \lim_{i \to \infty} \lambda_i \eta_{w_i} \rangle = \langle T' , \eta_u \rangle = \| u \|_{T'}
\end{align*}
Recall that $u$ was an arbitrary conjugacy class.  Since length functions are class functions, we conclude that $\| \cdot \|_T = \| \cdot \|_{T'}$ and so $T = T'$ in $\overline{\cv}_N$.
\end{proof}

\begin{cor}
Let $r \in F_N$ be nontrivial and consider $g \ncl(r)$.  Then $g \ncl(r)$ is strongly spectrally rigid.
\end{cor}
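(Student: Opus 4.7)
The plan is to reduce directly to the preceding proposition by a left-translation trick. Suppose $T,T'\in \overline{\cv}_N$ agree on $g\ncl(r)$, and let $u\in F_N$ be an arbitrary cyclically reduced conjugacy class. Let $\{w_i\}\subset\ncl(r)$ be the sequence built in the previous proof, so that $[\eta_{w_i}]\to[\eta_u]$. Since $gw_i\in g\ncl(r)$, I would use the sequence $\{gw_i\}$ in place of $\{w_i\}$ and show that $[\eta_{gw_i}]\to[\eta_u]$ as well. The conclusion $\|u\|_T=\|u\|_{T'}$ then follows by the same intersection-form argument as in the preceding proposition, and since $u$ is arbitrary we conclude $T=T'$ in $\overline{\cv}_N$.

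The key computation is to show $[\eta_{gw_i}]\to[\eta_u]$. Here I would fix a free basis $\mathcal{A}$ and observe that passing from the freely reduced word $w_i$ to the cyclically reduced form of $gw_i$ only perturbs the word by a bounded amount near the seam where $g$ is attached: at most $2|g|_\mathcal{A}$ letters are inserted or cancelled, independently of $i$. Consequently, for any fixed $v\in F_N$,
\[
\bigl|\,n_{gw_i}(v^{\pm}) - n_{w_i}(v^{\pm})\,\bigr| \le C(g,v)
\qquad\text{and}\qquad
\bigl|\,\|gw_i\|_\mathcal{A} - \|w_i\|_\mathcal{A}\,\bigr| \le 2|g|_\mathcal{A}.
\]
Since $\|w_i\|_\mathcal{A}\to\infty$, dividing and passing to the limit gives
\[
\lim_{i\to\infty}\frac{n_{gw_i}(v^{\pm})}{\|gw_i\|_\mathcal{A}}
 = \lim_{i\to\infty}\frac{n_{w_i}(v^{\pm})}{\|w_i\|_\mathcal{A}}
 = \frac{n_u(v^{\pm})}{\|u\|_\mathcal{A}},
\]
and Proposition \ref{KL2.10} then yields $[\eta_{gw_i}]\to[\eta_u]$.

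Having established this, I would choose scalars $\lambda_i>0$ with $\lambda_i\eta_{gw_i}\to\eta_u$. Since $gw_i\in g\ncl(r)$, the hypothesis gives $\|gw_i\|_T=\|gw_i\|_{T'}$ for every $i$, and continuity and linearity of the intersection form in the second argument yield
\[
\|u\|_T=\langle T,\eta_u\rangle
=\lim_{i\to\infty}\lambda_i\langle T,\eta_{gw_i}\rangle
=\lim_{i\to\infty}\lambda_i\langle T',\eta_{gw_i}\rangle
=\langle T',\eta_u\rangle=\|u\|_{T'}.
\]
As $u$ ranges over conjugacy classes and length functions are class functions, this gives $\|\cdot\|_T=\|\cdot\|_{T'}$, hence $T=T'$.

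The only substantive point is the bounded-perturbation estimate on counts, and the main (mild) obstacle is being careful about cyclic reduction, since $gw_i$ may cancel with itself at its cyclic seam even if both $g$ and $w_i$ are individually cyclically reduced. This is handled by the observation above: all such cancellation is confined to a window of length at most $2|g|_\mathcal{A}$, so it contributes only bounded error to both the numerator and the denominator and is swallowed in the limit.
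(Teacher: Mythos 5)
Your overall strategy is the same as the paper's: replace $w_i$ by $gw_i$, show $[\eta_{gw_i}]\to[\eta_u]$, and run the identical intersection-form computation. The problem is the key estimate you use to get there. The claim that passing from $w_i$ to the cyclically reduced form of $gw_i$ only disturbs a window of length at most $2|g|_\mathcal{A}$ is false in general: left multiplication by a fixed $g$ followed by \emph{cyclic} reduction is not a bounded-cancellation operation. For example, $w=a^nba^{-n}c$ is freely and cyclically reduced, yet for $g=c^{-1}$ the cyclically reduced form of $gw$ is the single letter $b$, so the cyclically reduced length drops by $2n+1$. The same phenomenon can occur for the specific words $w_i=\alpha u^i\beta r\beta^{-1}u^{-i}\alpha^{-1}\gamma u^i\delta r\delta^{-1}u^{-i}\gamma^{-1}$, precisely because in the preceding proof $\alpha,\beta,\gamma,\delta$ were chosen with no reference to $g$: if $g$ happens to satisfy $[g\alpha]=\gamma$ (e.g. $g=\gamma\alpha^{-1}$, suitably reduced), then cyclic reduction of $gw_i$ cancels the initial $\gamma u^i$ against the terminal $u^{-i}\gamma^{-1}$ and eats on the order of $2i\|u\|_\mathcal{A}$ letters. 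For such $g$ both of your displayed bounds, $|n_{gw_i}(v^\pm)-n_{w_i}(v^\pm)|\le C(g,v)$ and $\bigl|\|gw_i\|_\mathcal{A}-\|w_i\|_\mathcal{A}\bigr|\le 2|g|_\mathcal{A}$, fail, so the argument as written does not cover every $g$.

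The gap is repairable, and the simplest repair is the one the paper uses: since $g$ is given before the sequence is built, choose $\alpha,\beta,\gamma,\delta$ (now depending on $g$) so that $gw_i$ itself is freely and cyclically reduced; then the frequency computation of the preceding proposition applies verbatim to $gw_i$, with possibly different bounded constants $C_i,D_i$, and yields $[\eta_{gw_i}]\to[\eta_u]$ directly. Alternatively one could try to salvage your version by analyzing the deep cancellation -- any cyclic cancellation beyond a bounded prefix must run through matched $u^{\pm i}$ blocks, whose contributions to numerator and denominator are asymptotically proportional to $n_u(v^\pm)$ and $\|u\|_\mathcal{A}$, so the limiting ratio survives -- but that requires an explicit case analysis, not the bounded-window claim. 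The remainder of your argument (choice of $\lambda_i$, continuity and linearity of $\langle\cdot,\cdot\rangle$ in the second variable, and the conclusion $T=T'$) is correct and identical to the paper's.
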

\begin{proof}
In the proof of the above proposition, we may choose $\alpha,\beta,\gamma,\delta$ so that $g w_i$ is cyclically reduced.  Then -- with perhaps different constants $C_i, D_i$ -- we obtain that $[\eta_{g w_i}] \to [\eta_u]$.  We conclude as above that $g \ncl(r)$ is strongly spectrally rigid.
\end{proof}

\begin{thmaa}
Let $H \lhd F_N$ be a nontrivial normal subgroup.  Then for any $g \in F_N$, the coset $gH$ is strongly spectrally rigid.
\end{thmaa}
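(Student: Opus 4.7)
The plan is to deduce this theorem immediately from the preceding corollary on strong spectral rigidity of cosets of normal closures. The essential observation is that any nontrivial normal subgroup $H \lhd F_N$ contains the normal closure of each of its nontrivial elements.

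Concretely, I would proceed in two steps. First, pick any nontrivial $r \in H$; since $H$ is normal, every $F_N$-conjugate of $r$ lies in $H$, and hence so does every finite product of such conjugates, giving $\ncl(r) \subseteq H$, and therefore $g\ncl(r) \subseteq gH$. Second, I would invoke the (essentially trivial) monotonicity of strong spectral rigidity: if $\Sigma \subseteq \Sigma'$ and $\Sigma$ is strongly spectrally rigid, then so is $\Sigma'$, since any pair $T, T' \in \overline{\cv}_N$ with $\|\sigma\|_T = \|\sigma\|_{T'}$ for all $\sigma \in \Sigma'$ automatically satisfies the same equality for all $\sigma \in \Sigma$, forcing $T = T'$ by strong spectral rigidity of $\Sigma$. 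Applying this with $\Sigma = g\ncl(r)$ and $\Sigma' = gH$, together with the previous corollary, yields that $gH$ is strongly spectrally rigid.

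There is no real obstacle at this stage, as the theorem is a two-line consequence of what has already been established. All of the substantive work was front-loaded into the proof that $g\ncl(r)$ is strongly spectrally rigid, where the clever choice of the sequence
\[
w_i = \alpha u^i \beta r \beta^{-1} u^{-i} \alpha^{-1} \gamma u^i \delta r \delta^{-1} u^{-i} \gamma^{-1}
\]
inside $\ncl(r)$, combined with Proposition \ref{KL2.10}, the continuity and bilinearity of the intersection form (Proposition \ref{KLA}), and the identity $\langle T , \eta_g \rangle = \|g\|_T$, does all of the heavy lifting by producing counting currents whose projective classes converge to $[\eta_u]$ for an arbitrary cyclically reduced $u$. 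The present theorem simply harvests that work.
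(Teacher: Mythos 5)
Your proof is correct and follows exactly the paper's route: choose a nontrivial $r \in H$, note $\ncl(r) \subseteq H$ so $g\ncl(r) \subseteq gH$, and combine the corollary on strong spectral rigidity of $g\ncl(r)$ with the trivial monotonicity of (strong) spectral rigidity under passing to supersets. You merely make the monotonicity step more explicit than the paper does; there is no substantive difference.
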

\begin{proof}
Since $H$ is nontrivial and normal, $H \supset \ncl(r)$ for some nontrivial $r$.  By the above proposition, for any $g \in F_N$, $g \ncl(r)$ is strongly spectrally rigid.  Thus so is $gH$.
\end{proof}


\end{document}